\newtheorem{theorem}{Theorem}[section] \newtheorem{lemma}{Lemma}[theorem]
\newtheorem{proposition}[theorem]{Proposition}
\newtheorem{remark}{Remark}
\newtheorem{corollary}[theorem]{Corollary}
\newtheorem{definition}[theorem]{Definition}
\definecolor{webgreen}{rgb}{0,.5,0}
\definecolor{webbrown}{rgb}{.6,0,0}
\newfont{\cyrr}{wncyr10}
\definecolor{lightgrey}{rgb}{0.8, 0.84, 0.8}
\DeclareMathOperator{\tr}{tr}
\newcommand{\Fq}{{\mathbb{F}}_{q}}
\begin{document}

\title[Moments of Gaussian hypergeometric functions]
{Moments  of Gaussian hypergeometric functions over finite fields}

\author[P\lowercase{al}, R\lowercase{oy}, S\lowercase{adek}]{A\lowercase{nkan} P\lowercase{al}, B\lowercase{idisha} R\lowercase{oy}, \lowercase{and} M\lowercase{ohammad} S\lowercase{adek}}

\address{Ankan Pal, Department of Mathematics, University of L'Aquila, Via Vetoio, L'Aquila - 67100, Italy.}
\email{ankanpal100@gmail.com}

\address{Bidisha Roy, Institute of Mathematics of the Polish Academy of Sciences, Jana i J{\c e}drzeja {\' S}niadeckich 8, Warsaw 00-656, Poland}
\email{brroy123456@gmail.com}

\address{Mohammad Sadek, Faculty of Engineering and Natural Sciences,
Sabanc{\i} University,
Tuzla, \.{I}stanbul, 34956 Turkey}
\email{mohammad.sadek@sabanciuniv.edu}

\begin{abstract}
We prove explicit formulas for certain first and second moment sums of families of Gaussian hypergeometric functions $_{n+1}F_n$, $n\ge1$, over finite fields with $q$ elements where $q$ is an odd prime. This enables us to find an estimate for the value $_6F_5(1)$. In addition, we evaluate certain second moments of traces of the family of Clausen elliptic curves in terms of the value $_3F_2(-1)$.   
These formulas also allow us to express the product of certain $_2F_1$ and $_{n+1}F_n$ functions in terms of finite field Appell series which generalizes current formulas for products of $_2F_1$ functions.  We finally give closed form expressions for sums of Gaussian hypergeometric functions defined using different multiplicative characters.
 \end{abstract}

\maketitle
\let\thefootnote\relax\footnotetext{ \textbf{Keywords:} moments, hypergeometric functions, finite fields, elliptic curves}
\let\thefootnote\relax\footnotetext{\textbf{Mathematics Subject Classification:} 11T24, 11G20}

\section{Introduction}\label{sec:Intro}
\label{sec:introduction}

In 1987, Greene introduced {\it Gaussian hypergeometric functions over finite fields} as analogues of the classical hypergeometric functions, \cite{Bailey}. These Gaussian hypergeometric functions garnered much attention since then. Not only they share arithmetic properties with their calssical counterparts, they have also turned out to possess many connections with several arithmetic-geometric objects.  Values of Gaussian hypergeometric functions are expressed in terms of traces of families of elliptic curves, \cite{Kalita,barmanPAMS,Guindy+Ono,Ono,SadN,Sad19}. The number of rational points on certain hyperelliptic curves were linked to values of some Gaussian hypergeometric functions, \cite{Saikia,Sad}. These functions also appeared in connection with studying the arithmetic of certain $K3$ surfaces and Calabi-Yau threefolds, \cite{Ono1,Ah}.

Let $\epsilon$ and $\phi$ be the trivial and quadratic character, respectively, over a finite field $k$. In \cite{Ono+Saad+Saikia}, the asymptotic behaviour of  {\it moments} of certain Gaussian hypergeometric function was studied. Namely, the sum
$ \displaystyle \sum_{ x \in k } {}_{n+1} F_n\left(\begin{matrix} \phi & \phi \cdots  \phi \\ & \epsilon \cdots \epsilon \end{matrix} \ \middle| \  x \right)^m $, $n=1,2$, is evaluated; and the limiting distribution is proved to be semicircular when $n=1$, and it is a Batman distribution for the traces of the real orthogonal group $O_3$ when $n=2$, as $ |k| \rightarrow + \infty $, for any positive integer $m$. 

Let $\Fq$ be the finite field with $q$ elements where $q$ is an odd prime. In this work, we are concerned with computing moments of the following format 
\begin{eqnarray}
\label{eqq}
\sum_ { x \in \mathbb{F}_ q} A(x) \cdot {}_{n+1} F_n\left(\begin{matrix} \phi & \phi \cdots  \phi \\ & \epsilon \cdots \epsilon \end{matrix} \ \middle| \  x \right)^m \ 
\end{eqnarray}
where $A=\epsilon$ or $\phi$, and $ m=1,2$.

When $m =1$,  the sum above is evaluated explicitly in \S \ref{sec:Moments}.  As a consequence, we are able to present second moments expressions for the family of Clausen elliptic curves, namely 
\begin{eqnarray*}
\sum_{\lambda\ne0,-1}\phi(1+\lambda) a'_{\lambda}(q)^2+q + q^2\,\, _3F_2\left( \begin{matrix} \phi & \phi & \phi \\
 & \epsilon & \epsilon  \end{matrix} \middle| 1\right)&=&-1,\\
\sum_{\lambda\ne0,-1} \phi(\lambda) a'_{\lambda}(q)^2+q\,\phi(-1) + q^2\,\, _3F_2\left( \begin{matrix} \phi & \phi & \phi \\
 & \epsilon & \epsilon  \end{matrix} \middle| 1 \right)&=&-\phi(-1),
 \end{eqnarray*}
 where $ a^\prime_{\lambda} (q)$ is the trace of Frobenius of the Clausen elliptic curve $y^2=(x-1)(x^2+\lambda)$, $\lambda\ne-1,0$.  In particular, it can be seen that both moments can be evaluated in terms of the value $_3F_2(-1)$ which can be found as \cite[Theorem 6]{Ono}.
 
 We obtain a closed form expression for the sum (\ref{eqq}) when $m=2$ and $A=\phi$, namely, we prove that for any integer $n\ge 2$, the following identity holds
 $$\sum_{ \lambda \in \mathbb{F}_q} \phi(\lambda) \cdot {}_{n} F_{n-1} \left(\begin{matrix} \phi & \phi \cdots  \phi \\ & \epsilon \cdots \epsilon \end{matrix} \ \middle| \  \lambda \right)^2=q\, \phi(-1)^k \cdot {}_{2n} F_{2n-1} \left(\begin{matrix} \phi & \phi \cdots  \phi \\ & \epsilon \cdots \epsilon \end{matrix} \ \middle| \  1 \right).$$  In order to achieve this goal, we derive an inductive representation of Gaussian hypergeometric functions. Another application of the latter inductive representation is finding a new product formula for hypergeometric functions of type $_2F_1$ and $_{n+1}F_n$, $n\ge2$,
in terms of finite field analogues of Appell series. This generalizes product formulas that were obtained recently for Gaussian hypergeometric functions of type $_2F_1$ in \cite{Barman+Tripathi-1}.

Although special values of the Gaussian hypergeometric function ${}_2 F_1 \left(\begin{matrix}
\phi & \phi \\
 & \epsilon 
\end{matrix} \middle| x \right)$ and ${}_3 F_2 \left(\begin{matrix}
\phi & \phi & \phi \\
 & \epsilon & \epsilon
\end{matrix} \middle| x \right)$ have been found for some values of $x$, \cite{Ono}, the literature lacks general strategies to evaluate these functions at any $x$.  Therefore, estimates of certain Gaussian hypergeometric functions have also received some attention. In \cite{Ono1}, the value   $_4F_3(1):= {}_4F_3\left( \begin{matrix} \phi & \phi & \phi & \phi \\
& \epsilon & \epsilon & \epsilon  \end{matrix} \middle| 1\right)$ is proved to satisfy $|_4F_3(1)+1/q^2|\le 2/q^{3/2}$. This was realized by observing that the Calabi-Yau threefold $x+1/x+y+1/y+z+1/z+w+1/w=0$ is modular. By exploiting a second moment sum $\sum_x\phi(x)\,\,{}_3 F_2 \left(\begin{matrix}
\phi & \phi & \phi \\
 & \epsilon & \epsilon
\end{matrix} \middle| x \right)^2$, we prove that $|_6F_5(1)|=O(1/q^2)$.

We furthermore produce a generating function for Gaussian hypergeometric function of type $_{n+1}F_n$. Consequently, we find closed forms for sums of the form   
$
\setlength\arraycolsep{1pt} 
\sum_{\psi\ne \epsilon} {}_{n+2 } F_{n+1} \left(\begin{matrix} A & \ \ A & \ \ \cdots & \ \ A& \ \ \psi \\ & \ \ \epsilon& \ \ \cdots & \ \ \epsilon & \ \ \epsilon\end{matrix} \ \middle| x \right)\psi(t)$ where $A$ is a multiplicative character over $\Fq$, and the sum is over all nontrivial multiplicative characters over $\Fq$. In fact, the latter sum is expressed in terms of hypergeometric functions of type $_{n+1}F_n$.  In particular, we compute the latter sum for certain values of $t\in\Fq-\{0,1\}$ when $n$ is either $1$ or $2$ in terms of the traces of the Frobenius of Legendre and Clausen elliptic curves, respectively.

\section{Gaussian hypergeometric functions over finite fields}
\label{sec:pre}
In this section, we recall the main definitions and results that we will use throughout the paper. The finite field with $q$ elements will be denoted $\mathbb{F}_q$, where $q$ is an odd prime. We write $\epsilon$ and $\phi$ for the trivial and quadratic characters over $\Fq$ respectively. If $A$ and $B$ are multiplicative characters over $\Fq$, then their {\em Jacobi sum}, $J(A,B)$, is defined as 
$$J(A,B)=\sum_{x\in\Fq}A(x)B(1-x).$$ It is clear that $J(A,B)=J(B,A)$.  The {\em binomial coefficient} $\begin{pmatrix}
A \\ B
\end{pmatrix}$ is defined by $$\begin{pmatrix}
A \\ B
\end{pmatrix}= \frac{B(-1)}{q}J(A,\overline{B})$$ where $\overline B$ is the character inverse of $B$.  Greene, \cite{Greene}, defined the \textit{Gaussian hypergeometric series} for the multiplicative characters $A_i$, $i=0,1,\cdots,n$, and $B_i$, $i=1,\cdots,n,$ and $x\in\Fq$, in the following way \begin{equation}\label{eq:defnhypergeometric}
\setlength\arraycolsep{1pt}
{}_{n + 1} F_n \left(\begin{matrix} A_0 & \ \ A_1 & \ \ \cdots\ \ & A_n \\ & \ \ B_1 & \ \ \cdots\ \ & B_n \end{matrix} \ \middle| \ x \right) = \frac{q}{q - 1} \sum\limits_{\chi} \begin{pmatrix}
A_0 \chi \\ \chi 
\end{pmatrix} \begin{pmatrix}
A_1 \chi \\ B_1 \chi 
\end{pmatrix} \cdots \begin{pmatrix}
A_n \chi \\ B_n \chi
\end{pmatrix} \chi(x),\qquad n\ge1,
\end{equation} 
where $_1F_0(A|x)=\epsilon(x)\overline{A}(1-x).$

The following inductive representation of Gaussian hypergeometric series is \cite[Theorem 3.13]{Greene}.
\begin{theorem}\label{th:GreensTh3.13} For characters $A_0, A_1,\cdots, A_n$ and $B_1,\cdots,B_n$ over $\Fq$ and $x \in \Fq$ we have,
\begin{equation*}
\setlength\arraycolsep{1pt} 
{}_{n + 1} F_n \left(\begin{matrix} A_0 & \ \ A_1 & \ \ \cdots & \ \ A_n \\ & \ \ B_1 & \ \ \cdots & \ \ B_n \end{matrix} \ \middle| \ x \right) = \frac{A_n B_n (-1)}{q} \sum\limits_{y} \setlength\arraycolsep{1pt} {}_n F_{n - 1} \left(\begin{matrix} A_0 & \ \ A_1 & \ \ \cdots & \ \ A_{n - 1} \\ & \ \ B_1 & \ \ \cdots & \ \ B_{n - 1} \end{matrix} \ \middle| \ xy \right) A_n(y) {\overline{A}}_n B_n (1 - y).
\end{equation*}
\end{theorem}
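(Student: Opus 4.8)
The plan is to prove the identity directly from the definition \eqref{eq:defnhypergeometric}, by unfolding the right-hand side, interchanging the two (finite) summations, and recognizing the inner sum over $y$ as a Jacobi sum that collapses into exactly the one extra binomial coefficient by which ${}_{n+1}F_n$ differs from ${}_nF_{n-1}$. First I would substitute
\[
{}_n F_{n-1}\left(\begin{matrix} A_0 & A_1 & \cdots & A_{n-1} \\ & B_1 & \cdots & B_{n-1}\end{matrix} \ \middle| \ xy\right)=\frac{q}{q-1}\sum_{\chi}\binom{A_0\chi}{\chi}\prod_{i=1}^{n-1}\binom{A_i\chi}{B_i\chi}\,\chi(xy)
\]
into the right-hand side. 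Since both the $\chi$-sum and the $y$-sum are finite, I may freely exchange their order; writing $\chi(xy)=\chi(x)\chi(y)$ and collecting the $y$-dependent factors isolates
\[
\sum_{y}\chi(y)A_n(y)\,(\overline{A}_nB_n)(1-y)=\sum_{y}(A_n\chi)(y)\,(\overline{A}_nB_n)(1-y)=J(A_n\chi,\overline{A}_nB_n).
\]
After the interchange the entire right-hand side becomes $\frac{q}{q-1}\sum_{\chi}\binom{A_0\chi}{\chi}\prod_{i=1}^{n-1}\binom{A_i\chi}{B_i\chi}\chi(x)$ with each term multiplied by the scalar $\frac{A_nB_n(-1)}{q}J(A_n\chi,\overline{A}_nB_n)$.

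Everything therefore reduces to the single-character identity
\[
\frac{A_nB_n(-1)}{q}\,J(A_n\chi,\overline{A}_nB_n)=\binom{A_n\chi}{B_n\chi},
\]
which, once proved, matches the previous display term by term against the definition of the left-hand side (the product $\prod_{i=1}^{n}$ splits as $\prod_{i=1}^{n-1}$ times $\binom{A_n\chi}{B_n\chi}$). To establish it I would first rewrite the Jacobi sum as a binomial coefficient using $\binom{A}{B}=\frac{B(-1)}{q}J(A,\overline B)$ in the form $J(A,\overline B)=qB(-1)\binom{A}{B}$ (valid since $B(-1)=\pm1$); taking $A=A_n\chi$ and $\overline B=\overline{A}_nB_n$, so that $B=A_n\overline{B}_n$ and $B(-1)=A_nB_n(-1)$, gives $\frac{A_nB_n(-1)}{q}J(A_n\chi,\overline{A}_nB_n)=\binom{A_n\chi}{A_n\overline{B}_n}$. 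It then remains to verify the binomial symmetry $\binom{A_n\chi}{A_n\overline{B}_n}=\binom{A_n\chi}{B_n\chi}$ (an instance of Greene's general relation $\binom{A}{B}=\binom{A}{A\overline B}$, here with $A=A_n\chi$ and $B=B_n\chi$, noting $A_n\chi\cdot\overline{B_n\chi}=A_n\overline{B}_n$).

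For this symmetry I would pass to Gauss sums: writing $J(A,B)=g(A)g(B)/g(AB)$ and $g(C)g(\overline C)=C(-1)q$ for nontrivial $C$, expanding both $\binom{A_n\chi}{A_n\overline{B}_n}$ and $\binom{A_n\chi}{B_n\chi}$ and cancelling the common factor $g(A_n\chi)$, each side collapses via a complementary Gauss-sum pair to $B_n(-1)q$; the sign characters $A_n(-1)$, $B_n(-1)$, $\chi(-1)$ match because each enters squared. This confirms the required identity for all characters in the generic range.

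The step I expect to be the main obstacle is not this formal computation but the degenerate characters. The relations $J(A,B)=g(A)g(B)/g(AB)$ and $g(C)g(\overline C)=C(-1)q$ break down whenever one of $A_n\chi$, $B_n\chi$, $A_n\overline{B}_n$, or the occurring products is trivial, so for those finitely many $\chi$ I would return to the defining finite sums, invoking Greene's conventions for binomial coefficients with a trivial entry and the exact values of Jacobi sums involving $\epsilon$ (which depend on the convention $\chi(0)=0$, e.g.\ $J(\epsilon,\epsilon)=q-2$), and checking by hand the boundary contributions at $y=0$ and $y=1$. Verifying that these exceptional terms contribute consistently on both sides is where the genuine care lies; the remainder is bookkeeping.
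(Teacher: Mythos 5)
The paper never proves this statement—it is imported verbatim as \cite[Theorem 3.13]{Greene}—so your proposal can only be measured against Greene's original development, where the logical roles are reversed: there the displayed inductive formula is the \emph{definition} of ${}_{n+1}F_n$ and the character-sum expression \eqref{eq:defnhypergeometric} is the theorem. Your computation (unfold \eqref{eq:defnhypergeometric}, interchange the finite sums, recognize $\sum_y (A_n\chi)(y)(\overline{A}_nB_n)(1-y)=J(A_n\chi,\overline{A}_nB_n)$, and reduce everything to the single-character identity $\frac{A_nB_n(-1)}{q}J(A_n\chi,\overline{A}_nB_n)=\binom{A_n\chi}{A_n\overline{B}_n}=\binom{A_n\chi}{B_n\chi}$) is exactly the standard bridge between the two formulations, and that reduction is correct as you state it.

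Two repairs are needed before this is a complete proof. First, the degenerate-character analysis you defer to the end is both misdescribed and avoidable. Misdescribed: if $A_n=B_n$, then $A_n\overline{B}_n=\epsilon$ and the factorization $J(A,B)=g(A)g(B)/g(AB)$ breaks down for \emph{every} $\chi$, not for finitely many, so "finitely many exceptional $\chi$" is not accurate (that case is easy, since $\binom{A}{\epsilon}=\binom{A}{A}$ for all $A$, but it must be argued separately). Avoidable: the symmetry $\binom{A}{B}=\binom{A}{A\overline{B}}$ holds \emph{exactly for all} characters by a change of variables, with no Gauss sums at all. In $J(A,\overline{B})=\sum_{x\ne 0,1}A(x)\overline{B}(1-x)$ apply the involution $x\mapsto x/(x-1)$ of $\Fq\setminus\{0,1\}$; since $1-\frac{x}{x-1}=\frac{-1}{x-1}$, each term transforms to give $J(A,\overline{B})=A(-1)\,J(A,\overline{A}B)$, and hence $\binom{A}{B}=\binom{A}{A\overline{B}}$ on the nose (Greene's convention $\chi(0)=0$, including $\epsilon(0)=0$, kills all boundary terms on both sides). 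This single identity disposes of every case at once, which is the cleaner route. Second, your substitution step silently assumes ${}_nF_{n-1}$ is given by the character sum, which for $n=1$ is \emph{not} the paper's definition: here ${}_1F_0(A\mid x)=\epsilon(x)\overline{A}(1-x)$. So the case $n=1$ of the theorem needs the finite-field binomial theorem $\overline{A}(1-x)=\delta(x)+\frac{q}{q-1}\sum_\chi\binom{A\chi}{\chi}\chi(x)$ (Greene's equation (2.10), which this paper itself invokes in the proof of Theorem \ref{thm:gen}) to identify $\epsilon(x)\overline{A}(1-x)$ with $\frac{q}{q-1}\sum_\chi\binom{A\chi}{\chi}\chi(x)$ before your interchange of sums can be performed. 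With these two adjustments your argument is complete and is, in substance, Greene's own.
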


Let $ \zeta_p$ be a fixed primitive $p$-th root of unity in $\mathbb{C}$. The trace map $ \tr : \mathbb{F}_ q \rightarrow \mathbb{F}_p$ is given by $ \tr( \alpha) = \alpha + \alpha^{p} + \alpha^{p^2}+\cdots + \alpha^{ p^{r -1}}$. The additive character $ \theta: \mathbb{F}_q \rightarrow \mathbb{C}$ is defined by $ \theta(\alpha) = \zeta_p^{ \tr(\alpha)}$. For any multiplicative character $ \chi $ over $ \mathbb{F}_q$, the {\it Gauss sum} is defined by $$ g (\chi)= \sum_{ x \in \mathbb{F}_q} \chi (x) \theta(x).$$ 
\begin{definition}
For any multiplicative characters $ A, B, C, C^\prime$ over $ \mathbb{F}_q$ and for any $ x, y \in \mathbb{F}_q$, the {\em finite field Appell series} is defined by
$$ F_4 (A; B; C, C^\prime ; x, y )^* = \frac{1}{(q-1)^2} \sum_{ \chi, \lambda} \frac{ g ( A \chi \lambda) g ( B \chi \lambda) g (\overline{C \chi}) g ( \overline{C^ \prime \lambda}) g ( \overline{\lambda}) g(\overline{\chi})}{g(A) g(B) g (\overline{C}) g (\overline{C^\prime})} \chi(x) \lambda(y)$$
where the sum is over multiplicative characters $\chi$ and $\lambda$ over $\Fq$.
\end{definition}
The following lemma from \cite{Barman+Tripathi-1} provides a relation between finite field Appell series and the product of two Gaussian hypergeometric series.
\begin{lemma}\label{lem:BarmanTripathiLemma3.2} \cite[Lemma 3.2]{Barman+Tripathi-1} Let $A, B$ be nontrivial multiplicative characters different from the character $C$ over $\Fq$, and $z,w\in\Fq-\{1\}$. Then one has 
\begin{align*}
 {}_2 F_1 \left(\begin{matrix} A & B \\ & C \end{matrix} \ \middle| \ z \right) \ \ {}_2 F_1 \left(\begin{matrix} A & B \\ & AB\overline{C} \end{matrix} \ \middle| \ w \right)& = \frac{A(-1) g(B) g(\overline{C}) g(\overline{AB}C)}{q g(\overline{B}) g(B\overline{C}) g(\overline{A}C)} F_4 (A;B;C;AB\overline{C};z(1-w);w(1 - z))^* \\ 
&+  \frac{q B(-1) \overline{A}(1 - z) \overline{B}C(w) \overline{C}(1 - w)}{g(A) g(\overline{B}) g(B\overline{C}) g(\overline{A}C)} \delta\left(\frac{1 - w - z}{(1 - z)(1 - w)}\right), 
\end{align*}
where $\delta(x) = 1$ if $x = 0$, and $\delta(x) = 0$ otherwise.
\end{lemma}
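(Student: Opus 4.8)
The identity is the finite-field incarnation of the classical product formula expressing ${}_2F_1(a,b;c;z)\,{}_2F_1(a,b;a+b-c+1;w)$ as an Appell $F_4$ in the variables $z(1-w)$ and $w(1-z)$. Accordingly, the plan is to expand both sides as double sums over a pair of multiplicative characters $\chi,\lambda$ and match them term by term. First I would rewrite each ${}_2F_1$ through its defining character sum \eqref{eq:defnhypergeometric} and convert every binomial coefficient into Gauss sums by means of $\binom{X}{Y}=\frac{Y(-1)}{q}J(X,\overline Y)$ together with $J(X,\overline Y)=g(X)g(\overline Y)/g(X\overline Y)$, valid whenever $X\overline Y\ne\epsilon$. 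This turns the first factor into a single sum over $\chi$ carrying $\chi(z)$ and a product of Gauss sums in $\chi$, and likewise the second factor into a sum over $\lambda$ carrying $\lambda(w)$.

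Next I would form the product, obtaining a double sum over $(\chi,\lambda)$ weighted by $\chi(z)\lambda(w)$ whose Gauss-sum coefficient factors as a function of $\chi$ times a function of $\lambda$. On the Appell side I would start from the definition of $F_4(\cdots)^*$, split $\chi(z(1-w))=\chi(z)\chi(1-w)$ and $\lambda(w(1-z))=\lambda(w)\lambda(1-z)$, so that the monomials $\chi(z)\lambda(w)$ already agree with the product side. What remains is to reconcile the coupled Gauss sums $g(A\chi\lambda)g(B\chi\lambda)$ and the extra characters $\chi(1-w),\lambda(1-z)$ appearing on the Appell side with the decoupled coefficients on the product side.

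The decisive step is this reconciliation. I would break the coupling by reindexing the double sum, e.g.\ via $\mu=\chi\lambda$, so that $g(A\chi\lambda)g(B\chi\lambda)=g(A\mu)g(B\mu)$ depends on a single character, and expand the extra factors $\chi(1-w)$ and $\lambda(1-z)$ through the finite-field binomial theorem (the evaluation ${}_1F_0(A|x)=\epsilon(x)\overline A(1-x)$) together with orthogonality of characters. The inner character sum that results is a Jacobi-sum (Barnes-type) evaluation; performing it decouples the Gauss-sum weights and feeds back exactly the binomial coefficients of the two ${}_2F_1$ factors, while the outer sum reproduces their monomials $\chi(z),\lambda(w)$. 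The leftover multiplicative constant $\frac{A(-1)g(B)g(\overline C)g(\overline{AB}C)}{q\,g(\overline B)g(B\overline C)g(\overline A C)}$ is then simply the accumulated ratio of Gauss-sum normalizations.

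The main obstacle is the degenerate-character bookkeeping. The conversions $J(X,\overline Y)=g(X)g(\overline Y)/g(X\overline Y)$ and $g(\psi)g(\overline\psi)=\psi(-1)q$ both fail when the relevant character is trivial (there $g(\epsilon)=-1$), and these exceptional terms do not cancel against the generic ones. Isolating their aggregate contribution and showing that it collapses to the single term $\frac{q\,B(-1)\,\overline A(1-z)\,\overline B C(w)\,\overline C(1-w)}{g(A)g(\overline B)g(B\overline C)g(\overline A C)}\,\delta\!\left(\frac{1-w-z}{(1-z)(1-w)}\right)$, supported exactly where $1-w-z=0$, is where the genuine work lies; the hypotheses that $A,B\ne\epsilon$ and $A,B\ne C$ are precisely what keep the remaining generic terms nondegenerate. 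Once this $\delta$-supported residue is pinned down, the rest is routine Gauss- and Jacobi-sum algebra.
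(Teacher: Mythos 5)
First, a point of orientation: the paper never proves this lemma. It is imported verbatim, with attribution, as \cite[Lemma 3.2]{Barman+Tripathi-1}, and is used only as a black box in the proof of Theorem \ref{thm:product}. So there is no in-paper argument to compare yours against; the relevant benchmark is the proof in that reference, which, like your sketch, proceeds by expanding everything into character sums of Gauss sums and isolating the degenerate characters into the $\delta$-term.

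Measured against that benchmark, your proposal has a genuine gap at precisely the step you call decisive. Writing $C'=AB\overline{C}$, the reindexing $\mu=\chi\lambda$ does reduce $g(A\chi\lambda)g(B\chi\lambda)$ to $g(A\mu)g(B\mu)$, but it does not decouple the double sum: for fixed $\mu$ the inner sum becomes
\begin{equation*}
\sum_{\lambda} g(\overline{C}\,\overline{\mu}\lambda)\,g(\overline{\mu}\lambda)\,g(\overline{C'}\,\overline{\lambda})\,g(\overline{\lambda})\,\lambda\!\left(\frac{w(1-z)}{z(1-w)}\right),
\end{equation*}
a product of \emph{four} Gauss sums against a character value. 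This is not a Jacobi (``Barnes-type'') sum; it is itself a finite-field ${}_2F_1$-type sum, and orthogonality alone cannot evaluate it. The same problem reappears if you instead expand $\chi(1-w)$ and $\lambda(1-z)$ by Greene's binomial theorem: one is left with a coupled double sum over $\chi,\lambda$ of Gauss sums and binomial coefficients, and making it collapse to the product of the binomial coefficients of the two ${}_2F_1$'s is exactly where the parameter relation $C'=AB\overline{C}$ and the argument coupling $z(1-w)$, $w(1-z)$ must be exploited --- the finite-field counterpart of the fact that the classical Bailey identity this lemma mimics rests on Saalsch\"utz's theorem, not on series rearrangement alone. Your sketch asserts this evaluation (``performing it decouples the Gauss-sum weights and feeds back exactly the binomial coefficients'') without performing it, and it likewise defers the collapse of all degenerate contributions into the single term supported on $z+w=1$, which you yourself identify as ``where the genuine work lies.'' In short, you have correctly identified the shape of the proof and the location of its hard points, but those hard points \emph{are} the proof: as written, the argument cannot be completed without importing a Saalsch\"utz-type summation theorem over $\Fq$ (or an equivalent Hasse--Davenport/Jacobi-sum identity) to carry out the decisive evaluation, together with the explicit $\delta$-bookkeeping.
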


\section{An inductive representation}
\label{sec:Induction}

In what follows $q$ is an odd prime. 
In this section we produce an inductive representation of Gaussian hypergeometric functions over finite fields.

\begin{theorem}\label{thm:general_k}
Let $ n$ and $k$ be integers with $ n > k \geq 1$. For multiplicative characters $ A_0, A_1, \ldots A_{n-k}$ and $ B_1, \ldots, B_{n-k}$ over $ \Fq$ and $ x \in \Fq$, we have 
{\footnotesize\begin{align*}
 &{}_{n + 1} F_n \left(\begin{matrix} A_0 & \ \ A_1 &  \cdots & \ \ A_{n-k} &\phi &  \cdots  \phi\\& \ \ B_1 &  \cdots & B_{n-k}& \epsilon& \cdots \ \epsilon \end{matrix} \ \middle| \ x \right) \\&= 
 \frac{ \phi(-1)^k}{q} \sum_{b\in\Fq}\phi(b)\,\setlength\arraycolsep{1.5pt}{}_{n+1-k} F_{n - k} \left(\begin{matrix} A_0 & \ \ A_1 & \ \ \cdots & \ \ A_{n - k} \\ & \ \ B_1 & \ \ \cdots & \ \ B_{n - k} \end{matrix} \ \middle| \ bx \right){}_{k} F_{k-1} \left(\begin{matrix} \phi & \ \ \phi \cdots & \ \ \phi \\ & \ \ \epsilon \cdots & \ \ \epsilon  \end{matrix} \ \middle| \ b\right).
\end{align*}}
\end{theorem}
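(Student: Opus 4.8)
The plan is to argue by induction on $k$, keeping the free parameters $A_0,\dots,A_{n-k}$ and $B_1,\dots,B_{n-k}$ fixed; equivalently, fix $m\ge 1$ and a function ${}_{m+1}F_m$ with parameters $A_0,\dots,A_m$ and $B_1,\dots,B_m$, and let $k$ count the number of trailing $\phi/\epsilon$ pairs appended, so that $n=m+k$ and the hypothesis $n>k$ becomes $m\ge 1$. Every pair $(n,k)$ with $n>k\ge 1$ is then covered by some $(m,k)$ with $m\ge1$. Throughout I will use the two elementary identities $\phi(y)^2=\epsilon(y)$ and $\phi(y^{-1})=\phi(y)$ for $y\ne 0$, together with the fact that $\phi$ vanishes at $0$, to absorb and rearrange quadratic characters.

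For the base case $k=1$, I would apply Greene's inductive formula (Theorem \ref{th:GreensTh3.13}) to the left-hand side with $A_{m+1}=\phi$ and $B_{m+1}=\epsilon$. The prefactor $A_{m+1}B_{m+1}(-1)/q$ collapses to $\phi(-1)/q$, while $A_{m+1}(y)\,\overline{A}_{m+1}B_{m+1}(1-y)=\phi(y)\phi(1-y)$. On the other side, ${}_1F_0(\phi\mid b)=\epsilon(b)\overline\phi(1-b)=\epsilon(b)\phi(1-b)$ and $\phi(b)\epsilon(b)=\phi(b)$, so the $k=1$ instance of the claimed identity is literally Greene's formula after renaming the summation variable. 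This settles the base case.

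For the inductive step, assume the statement for $k$ and apply Greene's formula once more to the left-hand side carrying $k+1$ trailing pairs, peeling off a single $\phi/\epsilon$ pair. This rewrites it as $\frac{\phi(-1)}{q}\sum_y\phi(y)\phi(1-y)$ times the same kind of function now carrying only $k$ trailing pairs and evaluated at $xy$. I would then apply the induction hypothesis to this inner function, producing a double sum over $y$ and the auxiliary variable $c$ of the hypothesis. The substitution $c=by^{-1}$ (the $y=0$ terms drop out since $\phi(0)=0$) converts the argument of the ${}_{m+1}F_m$ factor from $cxy$ into $bx$, and $\phi(y)^2=\epsilon(y)$ cancels the remaining quadratic characters in $y$, leaving $\frac{1}{q}\sum_b\phi(b)\,{}_{m+1}F_m(\,\cdot\mid bx)$ multiplied by an inner sum over $y$.

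The crux, and the step I expect to be the main obstacle, is recognizing that this remaining inner $y$-sum is exactly the function ${}_{k+1}F_k$ with all parameters $\phi$ over $\epsilon$. After the further substitution $u=y^{-1}$ and the simplification $\phi(1-u^{-1})=\phi(-1)\phi(u)\phi(1-u)$, the inner sum becomes $\frac{\phi(-1)}{q}\sum_u\phi(u)\phi(1-u)\,{}_kF_{k-1}(\,\cdot\mid bu)$, which is precisely Greene's formula read in reverse for the all-$\phi$ function ${}_{k+1}F_k(\,\cdot\mid b)$. Collecting the constant factors yields $\phi(-1)^{k+1}/q$ and completes the induction. The only genuine care needed is the index and argument bookkeeping through the two changes of variables, and the consistent treatment of the $y=0$ and $u=0$ contributions, which the vanishing of $\phi$ at $0$ handles automatically.
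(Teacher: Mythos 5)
Your proof is correct, and although both arguments are inductions on $k$ powered by Greene's inductive representation (Theorem \ref{th:GreensTh3.13}), your execution differs from the paper's in two substantive ways. First, your base case is $k=1$, which, as you note, is literally Greene's formula once one observes ${}_1F_0(\phi \mid b)=\epsilon(b)\phi(1-b)$; this makes the whole theorem self-contained modulo Greene. The paper instead starts the induction at $k=2$ and proves that case by grouping the double sum along $yz=a$ and invoking the identity $\sum_y \phi\bigl(y(y-1)(y-a)\bigr) = q\,\phi(-1)\,{}_2F_1(a)$ from \cite[Theorem 1]{Ono} (a Legendre-curve trace evaluation), together with the special value ${}_2F_1(1)=-\phi(-1)/q$; your route needs no elliptic-curve input at all. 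Second, in the inductive step you apply Greene first (peeling one trailing $\phi/\epsilon$ pair) and the induction hypothesis second, after which the leftover inner sum is absorbed by a single application of Greene's formula read in reverse: after $u=y^{-1}$ and $\phi(1-u^{-1})=\phi(-1)\phi(u)\phi(1-u)$, one has $\sum_u \phi(u)\phi(1-u)\,{}_kF_{k-1}(bu) = q\,\phi(-1)\,{}_{k+1}F_k(b)$, so the inner sum contributes $q\,{}_{k+1}F_k(b)$ and the prefactor $\frac{\phi(-1)^{k+1}}{q^2}\cdot q$ collapses to the claimed $\frac{\phi(-1)^{k+1}}{q}$ — your constants check out. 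The paper performs these moves in the opposite order (induction hypothesis for $k-1$ first, then Greene), and must then expand ${}_{k-1}F_{k-2}$ into a $(k-2)$-fold iterated character sum, substitute $y\mapsto y^{-1}$, and recognize the resulting $(k-1)$-fold sum as ${}_kF_{k-1}(b)$, which is noticeably heavier bookkeeping. What your approach buys is economy and self-containment; what the paper's buys is the explicit link to Legendre-curve character sums, consistent with its broader theme of tying these hypergeometric functions to families of elliptic curves.
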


\begin{proof}
We use the inductive representation in Theorem \ref{th:GreensTh3.13}  to obtain the following identities 
\begin{footnotesize}
\begin{align*}
\setlength\arraycolsep{1pt} 
{}_{n + 1} F_n \left(\begin{matrix} A_0 & \ \ A_1 &  \cdots & \ \ A_{n}\\& \ \ B_1 &  \cdots & \ \ B_{n} \end{matrix} \ \middle| \ x \right)= \frac{A_n B_n (-1)}{q} \sum\limits_{y} \setlength\arraycolsep{1pt} {}_n F_{n - 1} \left(\begin{matrix} A_0 & \ \ A_1 & \ \ \cdots & \ \ A_{n - 1} \\ & \ \ B_1 & \ \ \cdots & \ \ B_{n - 1} \end{matrix} \ \middle| \ xy \right) A_n(y) {\overline{A}}_n B_n (1 - y)\\
=
\frac{A_n A_{n-1}B_nB_{n-1} (-1)}{q^2} \sum\limits_{y} A_n(y) {\overline{A}}_n B_n (1 - y)\setlength\arraycolsep{1pt} \sum_z{}_{n-1} F_{n - 2} \left(\begin{matrix} A_0 & \ \ A_1 & \ \ \cdots & \ \ A_{n - 2} \\ & \ \ B_1 & \ \ \cdots & \ \ B_{n - 2} \end{matrix} \ \middle| \ xyz \right) A_{n-1}(z) {\overline{A}}_{n-1} B_{n-1} (1 - z)\\
=
\frac{A_nA_{n-1} B_nB_{n-1} (-1)}{q^2} \sum\limits_{y,z} A_n(y) {\overline{A}}_n B_n (1 - y)\setlength\arraycolsep{1pt} {}_{n-1} F_{n - 2} \left(\begin{matrix} A_0 & \ \ A_1 & \ \ \cdots & \ \ A_{n - 2} \\ & \ \ B_1 & \ \ \cdots & \ \ B_{n - 2} \end{matrix} \ \middle| \ xyz \right) A_{n-1}(z) {\overline{A}}_{n-1} B_{n-1} (1 - z).
\end{align*}
\end{footnotesize} We first establish the correction of the inductive identity in the statement of the theorem when $ k =2$:
\begin{footnotesize}
\begin{align*}
\setlength\arraycolsep{1pt} 
{}_{n + 1} F_n \left(\begin{matrix} A_0 & \ \ A_1 &  \cdots & A_{n-2}&\ \ \phi &\ \ \phi\\& \ \ B_1 &  \cdots & \ \ B_{n-2} & \epsilon & \ \ \epsilon\end{matrix} \ \middle| \ x \right)=
 \frac{1}{q^2} \sum\limits_{y,z} \phi(yz)\phi((1 - y)(1-z))\setlength\arraycolsep{1pt} {}_{n-1} F_{n - 2} \left(\begin{matrix} A_0 & \ \ A_1 & \ \ \cdots & \ \ A_{n - 2} \\ & \ \ B_1 & \ \ \cdots & \ \ B_{n - 2} \end{matrix} \ \middle| \ xyz \right) \\
 = \frac{1}{q^2} \sum_{a\in\Fq^{\times}}\sum\limits_{yz=a} \phi(yz) \phi((1 - y)(1-z))\setlength\arraycolsep{1pt} {}_{n-1} F_{n - 2} \left(\begin{matrix} A_0 & \ \ A_1 & \ \ \cdots & \ \ A_{n - 2} \\ & \ \ B_1 & \ \ \cdots & \ \ B_{n - 2} \end{matrix} \ \middle| \ xyz \right)\\
 =\frac{1}{q^2} \sum_{a\in\Fq^{\times}}\sum\limits_{y\in\Fq^{\times}} \phi(a) \phi((1 - y)(1-ay^{-1}))\setlength\arraycolsep{1pt} {}_{n-1} F_{n - 2} \left(\begin{matrix} A_0 & \ \ A_1 & \ \ \cdots & \ \ A_{n - 2} \\ & \ \ B_1 & \ \ \cdots & \ \ B_{n - 2} \end{matrix} \ \middle| \ a x \right)\\
 =\frac{\phi(-1)}{q^2} \sum_{a\in\Fq^{\times}}\phi(a)\setlength\arraycolsep{1.5pt} {}_{n-1} F_{n - 2} \left(\begin{matrix} A_0 & \ \ A_1 & \ \ \cdots & \ \ A_{n - 2} \\ & \ \ B_1 & \ \ \cdots & \ \ B_{n - 2} \end{matrix} \ \middle| \ a x \right)\sum\limits_{y\in\Fq^{\times}}  \phi (y(y-1)(y-a))\\
 =\frac{1}{q} \sum_{a\in\Fq^{\times}}\phi(a)\setlength\arraycolsep{1.5pt} {}_{n-1} F_{n - 2} \left(\begin{matrix} A_0 & \ \ A_1 & \ \ \cdots & \ \ A_{n - 2} \\ & \ \ B_1 & \ \ \cdots & \ \ B_{n - 2} \end{matrix} \ \middle| \ a x \right){}_{2} F_1 \left(\begin{matrix} \phi & \ \ \phi \\& \ \ \epsilon  \end{matrix} \ \middle| \ a\right)
\end{align*}
\end{footnotesize}
where the last equality follows from \cite[Theorem 1]{Ono} together with the fact that $_2F_1(1)=-\phi(-1)/q$.

Suppose the statement of the theorem holds true for any $ \ell \leq k-1$. We now use the induction hypothesis. 

\vspace*{-5mm}

\begin{footnotesize}
\begin{align*}	
& {}_{n + 1} F_n \left(\begin{matrix} A_0 & \ \ A_1 &  \cdots & \ \ A_{n-k} &\phi &  \cdots  \phi\\& \ \ B_1 &  \cdots & B_{n-k}& \epsilon& \cdots \ \epsilon \end{matrix} \ \middle| \ x \right)  \\
&=\frac{\phi(-1)^{k-1}}{q} \sum_{a\in\Fq^{\times}}\phi(a) \setlength\arraycolsep{1pt} {}_{n+1-k+1} F_{n - k+1} \left(\begin{matrix} A_0 & \ \ A_1 & \ \ \cdots & \ \ A_{n - k} & \ \ \phi \\ & \ \ B_1 & \ \ \cdots & \ \ B_{n - k} & \ \ \epsilon \end{matrix} \ \middle| \ ax \right) {}_{k-1} F_{k-2} \left(\begin{matrix} \phi & \ \ \cdots  \phi \\& \ \ \epsilon \cdots \epsilon  \end{matrix} \ \middle| \ a\right)  \\	
&= \frac{\phi(-1)^{k-1}}{q} \sum\limits_{a\in\Fq^{\times}} \phi(a) \frac{\phi \epsilon (-1)}{q} \sum\limits_{y} \phi(y) \phi(1 - y) \setlength\arraycolsep{1pt} {}_{n+1-k} F_{n - k} \left(\begin{matrix} A_0 & \ \ A_1 & \ \ \cdots & \ \ A_{n - k} \\ & \ \ B_1 & \ \ \cdots & \ \ B_{n - k} \end{matrix} \ \middle| \ axy \right) {}_{k-1} F_{k-2} \left(\begin{matrix} \phi & \ \ \cdots  \phi \\& \ \ \epsilon \cdots \epsilon  \end{matrix} \ \middle| \ a\right) \\
&=\frac{\phi(-1)^k}{q^2}\sum\limits_{b \in\Fq^{\times}} \sum\limits_{ay=b }  \phi(b) \phi(1 - y) \setlength\arraycolsep{1pt} {}_{n+1-k} F_{n - k} \left(\begin{matrix} A_0 & \ \ A_1 & \ \ \cdots & \ \ A_{n - k} \\ & \ \ B_1 & \ \ \cdots & \ \ B_{n - k} \end{matrix} \ \middle| \ axy \right) {}_{k-1} F_{k-2} \left(\begin{matrix} \phi & \ \ \cdots \phi \\& \ \ \epsilon  \cdots \epsilon \end{matrix} \ \middle| a\right) \\	
&=\frac{\phi(-1)^k}{q^2} \sum\limits_{b} \phi(b)  \setlength\arraycolsep{1.5pt} {}_{n+1-k} F_{n - k} \left(\begin{matrix} A_0 & \ \ A_1 & \ \ \cdots & \ \ A_{n - k} \\ & \ \ B_1 & \ \ \cdots & \ \ B_{n - k} \end{matrix} \ \middle| \ bx \right) \\
& \times\sum\limits_{y} \phi(1 - y)  \frac{{\phi}^{k - 2} (-1)}{q^{k - 2}} \sum\limits_{x_1, x_2, \ldots, x_{k-2}} \phi(x_1) \phi(x_2)\cdots \phi(x_{k-2}) {\overline{\phi}} \epsilon (1 - x_1)\cdots {\overline{\phi}} \epsilon (1 - x_{k-2}) \overline{ \phi }( 1- by^{-1}x_1 \cdots x_{k-2})\\
&=\frac{\phi(-1)^k}{q} \sum\limits_{b} \phi(b)  \setlength\arraycolsep{1.5pt} {}_{n+1-k} F_{n - k} \left(\begin{matrix} A_0 & \ \ A_1 & \ \ \cdots & \ \ A_{n - k} \\ & \ \ B_1 & \ \ \cdots & \ \ B_{n - k} \end{matrix} \ \middle| \ bx \right) \\
&   \times\frac{{\phi}^{k - 1} (-1)}{q^{k -1 }} \sum\limits_{y, x_1, x_2, \ldots, x_{k-2}}  \phi(-1)\phi(1-y)\phi(x_1) \phi(x_2)\cdots \phi(x_{k-2}) {\overline{\phi}} \epsilon (1 - x_1)\cdots {\overline{\phi}} \epsilon (1 - x_{k-2}) \overline{ \phi }( 1- b y^{-1} x_1 \cdots x_{k-2})\\
&= \frac{\phi(-1)^k}{q} \sum\limits_{b} \phi(b)  \setlength\arraycolsep{1.5pt} {}_{n+1-k} F_{n - k} \left(\begin{matrix} A_0 & \ \ A_1 & \ \ \cdots & \ \ A_{n - k} \\ & \ \ B_1 & \ \ \cdots & \ \ B_{n - k} \end{matrix} \ \middle| \ bx \right)\\
&\times   \frac{{\phi}^{k - 1} (-1)}{q^{k -1 }} \sum\limits_{y \rightarrow y^{-1}, x_1, x_2, \ldots, x_{k-2}}  \phi(-1)\phi(1-y^{-1})\phi(x_1) \phi(x_2)\cdots \phi(x_{k-2}) {\overline{\phi}} \epsilon (1 - x_1)\cdots {\overline{\phi}} \epsilon (1 - x_{k-2}) \overline{ \phi }( 1- by x_1 \cdots x_{k-2})\\
&= \frac{\phi(-1)^k}{q} \sum\limits_{b} \phi(b)  \setlength\arraycolsep{1.5pt} {}_{n+1-k} F_{n - k} \left(\begin{matrix} A_0 & \ \ A_1 & \ \ \cdots & \ \ A_{n - k} \\ & \ \ B_1 & \ \ \cdots & \ \ B_{n - k} \end{matrix} \ \middle| \ bx \right) \\
&\times   \frac{{\phi}^{k - 1} (-1)}{q^{k -1 }} \sum\limits_{x_1, x_2, \ldots, x_{k-2},y}  \phi(x_1) \phi(x_2)\cdots \phi(x_{k-2}) \phi(y){\overline{\phi}} \epsilon (1 - x_1)\cdots {\overline{\phi}} \epsilon (1 - x_{k-2})\overline{\phi}\epsilon(1-y) \overline{ \phi }( 1- b x_1 \cdots x_{k-2}y)\\
&= \frac{\phi(-1)^k}{q} \sum\limits_{b} \phi(b)  \setlength\arraycolsep{1.5pt} {}_{n+1-k} F_{n - k} \left(\begin{matrix} A_0 & \ \ A_1 & \ \ \cdots & \ \ A_{n - k} \\ & \ \ B_1 & \ \ \cdots & \ \ B_{n - k} \end{matrix} \ \middle| \ bx \right) {}_{k} F_{k-1} \left(\begin{matrix} \phi & \ \ \phi \cdots & \ \ \phi \\ & \ \ \epsilon \cdots & \ \ \epsilon  \end{matrix} \ \middle| \ b\right).
\end{align*}
\end{footnotesize}
\end{proof} 

As an application of the inductive representation in Theorem \ref{thm:general_k}, we generalize some of the product formulas in \cite{{Barman+Tripathi-1}}.  Lemma \ref{lem:BarmanTripathiLemma3.2} may be generalized to express the product of Gaussian hypergeometric series of types $_2F_1$ and $_{n+1}F_n$ for any $n\ge 2$. 

\begin{theorem}\label{thm:product}
Let $ n\ge 2$ be an integer. Let $ A_0, A_1, \ldots, A_{n-2}$ and $ B_1, \ldots, B_{n-2}$ be multiplicative characters over $ \Fq$. Let $ x,z \in \Fq-\{0,1\}$.  One has
{\footnotesize\begin{align*}
 {}_{2} F_1 \left(\begin{matrix} \phi & \ \ \phi \\& \ \ \epsilon  \end{matrix} \ \middle| \ z \right){}_{n + 1} F_n \left(\begin{matrix} A_0 & \ \ A_1 &  \cdots & \ \ A_{n-2} &\phi &  \phi\\& \ \ B_1 &  \cdots & B_{n-2}& \epsilon&   \epsilon \end{matrix} \ \middle| \ x \right) 
 =  \frac{\phi(-1) }{q^2} \phi(1-z)\,\setlength\arraycolsep{1.5pt}{}_{n-1} F_{n - 2} \left(\begin{matrix} A_0 & \ \ A_1 & \ \ \cdots & \ \ A_{n - 2} \\ & \ \ B_1 & \ \ \cdots & \ \ B_{n - 2} \end{matrix} \ \middle| \ (1-z)x \right)\\+ \frac{1}{q}{}_{n-1} F_{n - 2} \left(\begin{matrix} A_0 & \ \ A_1 & \ \ \cdots & \ \ A_{n - 2} \\ & \ \ B_1 & \ \ \cdots & \ \ B_{n - 2} \end{matrix} \ \middle| \ x \right){}_{2} F_{1} \left(\begin{matrix} \phi &  \phi \\ & \ \ \epsilon  \end{matrix} \ \middle| \ 1\right){}_{2} F_{1} \left(\begin{matrix} \phi &  \phi \\ & \ \ \epsilon  \end{matrix} \ \middle| \ z\right)\\
 +\frac{ 1}{q^3} \sum_{w\in\Fq-\{1\}}\phi(w)\,\setlength\arraycolsep{1.5pt}{}_{n-1} F_{n - 2} \left(\begin{matrix} A_0 & \ \ A_1 & \ \ \cdots & \ \ A_{n - 2} \\ & \ \ B_1 & \ \ \cdots & \ \ B_{n - 2} \end{matrix} \ \middle| \ wx \right) F_4 (\phi;\phi;\epsilon;\epsilon;z(1-w);w(1 - z))^*.
\end{align*}}
\end{theorem}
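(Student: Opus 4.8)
The plan is to derive the product formula directly from the inductive representation of Theorem~\ref{thm:general_k} combined with the Barman--Tripathi product formula of Lemma~\ref{lem:BarmanTripathiLemma3.2}. First I would specialize Theorem~\ref{thm:general_k} to $k=2$, which, after using $\phi(-1)^2=1$, expresses the ${}_{n+1}F_n$ on the left as
\[
{}_{n+1}F_n\left(\begin{matrix} A_0 & \cdots & A_{n-2} & \phi & \phi \\ & \cdots & B_{n-2} & \epsilon & \epsilon \end{matrix}\,\middle|\, x\right)=\frac1q\sum_{b\in\Fq}\phi(b)\,{}_{n-1}F_{n-2}\left(\begin{matrix} A_0 & \cdots & A_{n-2}\\ & \cdots & B_{n-2}\end{matrix}\,\middle|\, bx\right){}_2F_1\left(\begin{matrix}\phi&\phi\\&\epsilon\end{matrix}\,\middle|\, b\right).
\]
Multiplying through by ${}_2F_1\!\left(\begin{matrix}\phi&\phi\\&\epsilon\end{matrix}\middle| z\right)$ brings the product ${}_2F_1(\cdots|b)\,{}_2F_1(\cdots|z)$ inside the $b$-sum, and this product is exactly the shape handled by Lemma~\ref{lem:BarmanTripathiLemma3.2}.

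Next I would apply Lemma~\ref{lem:BarmanTripathiLemma3.2} with $A=B=\phi$ and $C=\epsilon$, so that $AB\overline{C}=\epsilon$ and both factors carry upper parameters $\phi,\phi$ and lower parameter $\epsilon$; the two arguments are taken to be $z$ and $w=b$. The hypotheses hold because $q$ is odd, so $\phi$ is nontrivial and $\phi\ne\epsilon$. Since the lemma requires $w\ne 1$, I would first peel off the $b=1$ term from the sum (the $b=0$ term vanishes because $\phi(0)=0$); this isolated term contributes $\frac1q\,{}_{n-1}F_{n-2}(\cdots|x)\,{}_2F_1(\cdots|1)\,{}_2F_1(\cdots|z)$, which is precisely the middle term of the claim. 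For the remaining $b\in\Fq-\{0,1\}$ the lemma replaces each product by an Appell term plus a delta term.

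The crux is evaluating the Gauss-sum prefactors of Lemma~\ref{lem:BarmanTripathiLemma3.2} at $A=B=\phi$, $C=\epsilon$. Using $g(\epsilon)=-1$ and $g(\phi)^2=\phi(-1)q$ (so $g(\phi)^4=q^2$), the Appell coefficient collapses to $1/q^2$ and the delta coefficient to $\phi(-1)\phi(1-z)\phi(b)/q$. Carrying the Appell terms through the $b$-sum yields $\frac1{q^3}\sum_{w\in\Fq-\{1\}}\phi(w)\,{}_{n-1}F_{n-2}(\cdots|wx)\,F_4(\phi;\phi;\epsilon;\epsilon;z(1-w);w(1-z))^*$, the $w=0$ term again vanishing, which matches the third term. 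Finally, the delta $\delta\left(\frac{1-b-z}{(1-z)(1-b)}\right)$ is supported exactly at $b=1-z$, which lies in $\Fq-\{0,1\}$ because $z\ne 0,1$; substituting $b=1-z$ and simplifying the accumulated powers of $\phi(1-z)$ via $\phi(1-z)^2=1$ leaves $\frac{\phi(-1)}{q^2}\phi(1-z)\,{}_{n-1}F_{n-2}(\cdots|(1-z)x)$, the first term.

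The main obstacle I anticipate is bookkeeping rather than conceptual: one must track precisely the several factors of $\phi(b)$, $\phi(1-z)$, and $\phi(-1)$ arising from the outer sum, the inductive weight, and the delta coefficient, and verify they combine (using $\phi(\cdot)^2=1$ on nonzero arguments) to the stated constants. A secondary point requiring care is confirming that the separation of the $b=1$ term legitimately allows Lemma~\ref{lem:BarmanTripathiLemma3.2} to be applied termwise across the entire reduced sum $b\in\Fq-\{0,1\}$.
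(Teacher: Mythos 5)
Your proposal is correct and is essentially the paper's own proof: both specialize Theorem \ref{thm:general_k} to $k=2$, peel off the $w=1$ term, and apply Lemma \ref{lem:BarmanTripathiLemma3.2} termwise with $A=B=\phi$, $C=\epsilon$, simplifying the Gauss-sum prefactors via $g(\epsilon)=-1$ and $g(\phi)^2=\phi(-1)q$ so that the Appell coefficient becomes $1/q^2$ and the delta term, supported at $w=1-z$, produces the first summand. The bookkeeping of the $\phi(w)$, $\phi(1-z)$, and $\phi(-1)$ factors that you flag as the main obstacle works out exactly as you describe, matching the paper's computation.
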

\begin{proof}
The following identities hold because of Theorem \ref{thm:general_k}
{\footnotesize
\begin{align*}
 {}_{2} F_1 \left(\begin{matrix} \phi & \ \ \phi \\& \ \ \epsilon  \end{matrix} \ \middle| \ z \right){}_{n + 1} F_n \left(\begin{matrix} A_0 & \ \ A_1 &  \cdots & \ \ A_{n-2} &\phi &    \phi\\& \ \ B_1 &  \cdots & B_{n-2}& \epsilon& \epsilon \end{matrix} \ \middle| \ x \right) \\=
 \frac{ 1}{q} \sum_{w\in\Fq}\phi(w)\,\setlength\arraycolsep{1.5pt}{}_{n-1} F_{n - 2} \left(\begin{matrix} A_0 & \ \ A_1 & \ \ \cdots & \ \ A_{n - 2} \\ & \ \ B_1 & \ \ \cdots & \ \ B_{n - 2} \end{matrix} \ \middle| \ wx \right){}_{2} F_{1} \left(\begin{matrix} \phi &  \phi \\ & \ \ \epsilon  \end{matrix} \ \middle| \ w\right){}_{2} F_{1} \left(\begin{matrix} \phi &  \phi \\ & \ \ \epsilon  \end{matrix} \ \middle| \ z\right)\\
 = \frac{ 1}{q} \sum_{w\in\Fq-\{1\}}\phi(w)\,\setlength\arraycolsep{1.5pt}{}_{n-1} F_{n - 2} \left(\begin{matrix} A_0 & \ \ A_1 & \ \ \cdots & \ \ A_{n - 2} \\ & \ \ B_1 & \ \ \cdots & \ \ B_{n - 2} \end{matrix} \ \middle| \ wx \right)\Big(  \frac{\phi(-1) g(\epsilon)^2 }{q g(\phi)^2} F_4 (\phi;\phi;\epsilon;\epsilon;z(1-w);w(1 - z))^*\\ +  \frac{q \phi(-1) \phi(1 - z) \phi(w) \epsilon(1 - w)}{g(\phi)^4 } \delta\left(\frac{1 - w - z}{(1 - z)(1 - w)}\right)\Big)\\+ \frac{1}{q}{}_{n-1} F_{n - 2} \left(\begin{matrix} A_0 & \ \ A_1 & \ \ \cdots & \ \ A_{n - 2} \\ & \ \ B_1 & \ \ \cdots & \ \ B_{n - 2} \end{matrix} \ \middle| \ x \right){}_{2} F_{1} \left(\begin{matrix} \phi &  \phi \\ & \ \ \epsilon  \end{matrix} \ \middle| \ 1\right){}_{2} F_{1} \left(\begin{matrix} \phi &  \phi \\ & \ \ \epsilon  \end{matrix} \ \middle| \ z\right).
\end{align*}}
Since $g(\epsilon)= -1$ and $g(\phi)^2=\phi(-1)q$, the result follows. 
\end{proof}

\section{Evaluation of moments of Gaussian hypergeometric functions}
\label{sec:Moments}

From now on, we always write $_{n+1}F_n(x)$ to mean $ {}_{n+1} F_{n} \left( \begin{matrix}
\phi & \phi & \cdots & \phi \\
& \epsilon & \cdots & \epsilon 
\end{matrix} \middle |  \ x \right) $.

In this section, we show how to use the inductive representation introduced in \S\ref{sec:Induction} to evaluate first and second moments of Gaussian hypergeometric functions over $\Fq$. In \cite{Ono+Saad+Saikia}, explicit representations of moments of the hypergeometric functions $_2F_1$ and $_3F_2$ were given. Namely, the sums of the form $\sum_x {}_nF_{n-1}(x)^m$, $m\ge 1$, $n=2,3$, were expressed in terms of sums of class numbers.  Most of the moments we will evaluate in this section are defined as follows. 

\begin{definition} 
\label{Def:weighted}
For any positive integer $m $, the {\em $m$-th weighted-moment of the Gaussian hypergeometric function} $_{n+1}F_n$ is
$$\sum_ { x \in \mathbb{F}_ q} \phi(x) \cdot {}_{n+1} F_n(x)^m \ .$$
\end{definition}
We remark that the terms of the sums we propose in Definition \ref{Def:weighted} are weighted by the value of the quadratic character $\phi$.

\subsection{Traces of elliptic curves and moments}

In connection with finding values of hypergeometric functions $_2F_1(x)$ and $_3F_2(x)$, two families of elliptic curves are studied.  
The family of Legendre elliptic curves defined by $$E_ \lambda : y^2 =  x(x - 1)(x - \lambda),\qquad \lambda\in\Fq-\{0,1\}$$ and  the family of Clausen elliptic curves defined by  
\begin{equation*}\label{Clausen}
E'_{\lambda}: y^2 = (x-1) (x^2 + \lambda),\qquad\lambda\in\Fq-\{0,-1\}.
\end{equation*} 
 
Setting $a_{\lambda}(q)$ and $a'_{\lambda}(q)$ to be the trace of the Frobenius of $E_{\lambda} $ and $E'_{\lambda}$ over $\Fq$, respectively, one has
\begin{equation*} \label{trace of leg and hypergeometric}
a_{\lambda}(q)=-q\,\phi(-1)\,{}_2 F_1 (\lambda),
\end{equation*}
see \cite[Th. 1]{Ono}. Similaly,
 $$a'_{\frac{\lambda}{1-\lambda}} (q)^2=q + q^2 \phi\left(1- \lambda \right) \cdot {}_3 F_2\left(\lambda \right), \mbox { for } \lambda \neq 0, 1 $$ see \cite{Guindy+Ono}.

\subsection{First moments}

We start with proving the following useful lemma.

\begin{lemma}
\label{Lem:1}
For multiplicative characters $ A_0, A_1, \ldots A_{n},\psi$ and $ B_1, \ldots, B_{n}$ over $ \Fq$ and $ x \in \Fq$, we have
\begin{equation*}
\setlength\arraycolsep{1pt} 
{}_{n + 2} F_{n+1} \left(\begin{matrix} A_0 & \ \ A_1 & \ \ \cdots & \ \ A_{n} \ \ \psi\\ & \ \ B_1 & \ \ \cdots & \ \ B_{n} \ \ \psi\end{matrix} \ \middle| \ x \right)  =\frac{-1}{q } {}_{n + 1} F_{n} \left(\begin{matrix} A_0 & \ \ A_1 & \ \ \cdots & \ \ A_{n}\\ & \ \ B_1 & \ \ \cdots & \ \ B_{n} \end{matrix} \ \middle| \ x \right)+\begin{pmatrix}
A_0 \overline{\psi} \\ \overline{\psi}
\end{pmatrix} \cdots \begin{pmatrix}
A_{n}\overline{\psi}\\ B_{n} \overline{\psi}
\end{pmatrix}\overline{\psi}(x).
\end{equation*}
\end{lemma}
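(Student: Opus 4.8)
The plan is to expand the left-hand side using the defining series \eqref{eq:defnhypergeometric} and to isolate the single extra binomial factor that distinguishes ${}_{n+2}F_{n+1}$ from ${}_{n+1}F_n$. Writing
$$P(\chi) = \begin{pmatrix} A_0\chi \\ \chi\end{pmatrix}\begin{pmatrix} A_1\chi \\ B_1\chi\end{pmatrix}\cdots\begin{pmatrix} A_n\chi \\ B_n\chi\end{pmatrix},$$
the definition gives
$${}_{n+2}F_{n+1}\left(\begin{matrix} A_0 & A_1 & \cdots & A_n & \psi\\ & B_1 & \cdots & B_n & \psi\end{matrix}\ \middle|\ x\right) = \frac{q}{q-1}\sum_\chi P(\chi)\begin{pmatrix}\psi\chi \\ \psi\chi\end{pmatrix}\chi(x),$$
so the whole problem reduces to evaluating the diagonal binomial coefficient $\begin{pmatrix}\psi\chi \\ \psi\chi\end{pmatrix}$ as $\chi$ ranges over all multiplicative characters.

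Next I would compute this coefficient from its definition $\begin{pmatrix}A\\B\end{pmatrix} = \frac{B(-1)}{q}J(A,\overline B)$, which here reads $\begin{pmatrix}\psi\chi \\ \psi\chi\end{pmatrix} = \frac{\psi\chi(-1)}{q}J(\psi\chi, \overline{\psi\chi})$, and split into two cases according to whether $\psi\chi$ is trivial. For $\chi \ne \overline\psi$ the character $\psi\chi$ is nontrivial, and the standard Jacobi sum identity $J(A,\overline A) = -A(-1)$ together with $(\psi\chi(-1))^2 = 1$ gives $\begin{pmatrix}\psi\chi \\ \psi\chi\end{pmatrix} = -\frac1q$. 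For $\chi = \overline\psi$ we have $\psi\chi = \epsilon$, and the direct count $J(\epsilon,\epsilon) = q-2$ yields $\begin{pmatrix}\epsilon \\ \epsilon\end{pmatrix} = \frac{q-2}{q}$. Combining the cases I can write, uniformly in $\chi$,
$$\begin{pmatrix}\psi\chi \\ \psi\chi\end{pmatrix} = -\frac1q + \frac{q-1}{q}\,\delta_{\chi,\overline\psi},$$
where $\delta_{\chi,\overline\psi}$ equals $1$ when $\chi = \overline\psi$ and $0$ otherwise.

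Finally I would substitute this back and split the sum. The constant term $-\frac1q$ contributes $-\frac1q\cdot\frac{q}{q-1}\sum_\chi P(\chi)\chi(x)$, which is exactly $-\frac1q\,{}_{n+1}F_n\!\left(\begin{smallmatrix} A_0 & \cdots & A_n\\ & B_1 \cdots & B_n\end{smallmatrix}\middle| x\right)$ by \eqref{eq:defnhypergeometric}. The term $\frac{q-1}{q}\delta_{\chi,\overline\psi}$ collapses the sum to the single character $\chi=\overline\psi$, and the prefactor $\frac{q}{q-1}\cdot\frac{q-1}{q}=1$ leaves precisely $P(\overline\psi)\,\overline\psi(x) = \begin{pmatrix}A_0\overline\psi\\\overline\psi\end{pmatrix}\cdots\begin{pmatrix}A_n\overline\psi\\B_n\overline\psi\end{pmatrix}\overline\psi(x)$, the second term on the right. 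Adding the two pieces gives the claimed identity.

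The computation is routine once the diagonal coefficient is known; the one point requiring care — and the step I expect to be the main obstacle — is the evaluation of $\begin{pmatrix}\psi\chi \\ \psi\chi\end{pmatrix}$, in particular keeping track of the exceptional value $\frac{q-2}{q}$ at $\chi = \overline\psi$ rather than the generic $-\frac1q$, since it is exactly the discrepancy $\frac{q-1}{q}$ between these two that produces the second term of the lemma. I would also remark that the result is a formal identity of character sums valid for every $x$, so no separate treatment of $x=0$ is needed beyond the standing convention on $\chi(0)$.
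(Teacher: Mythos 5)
Your proof is correct and follows essentially the same route as the paper: expand ${}_{n+2}F_{n+1}$ via the defining series, evaluate the diagonal coefficient $\begin{pmatrix}\psi\chi \\ \psi\chi\end{pmatrix}=-\tfrac{1}{q}+\tfrac{q-1}{q}\,\delta(\psi\chi)$, and split the sum into the ${}_{n+1}F_n$ part and the collapsed $\chi=\overline{\psi}$ term. The only difference is cosmetic: you rederive that diagonal evaluation from the Jacobi sum identities $J(A,\overline{A})=-A(-1)$ and $J(\epsilon,\epsilon)=q-2$, whereas the paper simply cites it as Greene's Equation (2.12).
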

\begin{proof}
Using Definition (\ref{eq:defnhypergeometric}), one gets 
\begin{equation*}
\setlength\arraycolsep{1pt} 
{}_{n + 2} F_{n+1} \left(\begin{matrix} A_0 & \ \ A_1 & \ \ \cdots & \ \ A_{n} \ \ \psi\\ & \ \ B_1 & \ \ \cdots & \ \ B_{n} \ \ \psi\end{matrix} \ \middle| \ x \right)  = \frac{q}{q - 1} \sum\limits_{\chi} \begin{pmatrix}
A_0 \chi \\ \chi 
\end{pmatrix} \cdots \begin{pmatrix}
A_{n} \chi \\ B_{n} \chi
\end{pmatrix}\begin{pmatrix}
\psi \chi \\ \psi\chi
\end{pmatrix} \chi(x).
\end{equation*}
One knows that $\begin{pmatrix}
\psi \chi \\ \psi\chi
\end{pmatrix}=-\frac{1}{q} + \frac{q-1}{q}\delta(\psi\chi) $ where $\delta(\chi)=0$ if $\chi\ne\epsilon$ and $\delta(\epsilon)=1$, \cite[Equation (2.12)]{Greene}. Thus,  the statement holds by definition (\ref{eq:defnhypergeometric}).
\end{proof}

In the following proposition, we compute the first moment and weighted moment of the Gaussian hypergeomtric function $_{n+1}F_n$. 

\begin{proposition}
\label{prop:1}
Let $n\ge 1$ be an integer. The following identity holds true. 
\begin{equation*}
\setlength\arraycolsep{1pt} 
q^n\, \sum\limits_{y} \setlength\arraycolsep{1pt} {}_{n+1} F_{n } (y )=\left(-1\right)^{n+1} .
\end{equation*}
\end{proposition}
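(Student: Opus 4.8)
The plan is to argue directly from the series definition (\ref{eq:defnhypergeometric}) rather than to induct. Writing out ${}_{n+1}F_n(y)$ with $A_0=\cdots=A_n=\phi$ and $B_1=\cdots=B_n=\epsilon$, every binomial factor becomes $\begin{pmatrix}\phi\chi\\\chi\end{pmatrix}$, since $B_i\chi=\epsilon\chi=\chi$ for each $i$. Hence all $n+1$ factors coincide and combine into a single $(n+1)$-st power,
$${}_{n+1}F_n(y)=\frac{q}{q-1}\sum_{\chi}\begin{pmatrix}\phi\chi\\\chi\end{pmatrix}^{n+1}\chi(y).$$
First I would sum this identity over $y\in\Fq$ and interchange the two finite sums, which is harmless.

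The key step is orthogonality of characters. In Greene's convention every multiplicative character satisfies $\chi(0)=0$, so $\sum_{y\in\Fq}\chi(y)=\sum_{y\in\Fq^{\times}}\chi(y)$ equals $q-1$ when $\chi=\epsilon$ and $0$ otherwise. Thus every term in the $\chi$-sum vanishes except the one indexed by $\chi=\epsilon$, and we are left with
$$\sum_{y}{}_{n+1}F_n(y)=\frac{q}{q-1}\begin{pmatrix}\phi\\\epsilon\end{pmatrix}^{n+1}(q-1)=q\begin{pmatrix}\phi\\\epsilon\end{pmatrix}^{n+1}.$$

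It then remains only to evaluate the single binomial coefficient. By definition $\begin{pmatrix}\phi\\\epsilon\end{pmatrix}=\frac{\epsilon(-1)}{q}J(\phi,\overline{\epsilon})=\frac{1}{q}J(\phi,\epsilon)$, and $J(\phi,\epsilon)=\sum_{x}\phi(x)\epsilon(1-x)=\sum_{x\ne 0,1}\phi(x)=-\phi(1)=-1$, using that $\phi$ sums to zero over $\Fq^{\times}$. Hence $\begin{pmatrix}\phi\\\epsilon\end{pmatrix}=-1/q$, and substituting gives $\sum_y{}_{n+1}F_n(y)=q(-1/q)^{n+1}=(-1)^{n+1}/q^n$; multiplying by $q^n$ yields the stated identity.

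I expect no serious obstacle: the argument is a one-line orthogonality collapse followed by a Jacobi-sum evaluation. The only points requiring care are the convention $\chi(0)=0$ (which makes the $y=0$ term harmless and lets orthogonality run over all of $\Fq$) and verifying that the $n+1$ binomial factors genuinely coincide so that they merge into a single power, which is what produces the clean exponent $(-1/q)^{n+1}$. One could alternatively induct on $n$ via Lemma \ref{Lem:1} or Theorem \ref{thm:general_k}, but the direct evaluation is shorter and makes the dependence on $n$ transparent.
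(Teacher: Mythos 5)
Your proof is correct, and it takes a genuinely different route from the paper's. The paper plays two reductions of the auxiliary function ${}_{n+2}F_{n+1}$ (with a trailing $(\epsilon,\epsilon)$ parameter pair) against each other: Greene's inductive representation (Theorem \ref{th:GreensTh3.13}) with $A_{n+1}=B_{n+1}=\epsilon$ expresses it as $\frac{1}{q}\sum_{y\ne 0,1}{}_{n+1}F_n(xy)$, while Lemma \ref{Lem:1} with $\psi=\epsilon$ expresses it as $-\frac{1}{q}\,{}_{n+1}F_n(x)+\binom{\phi}{\epsilon}^{n+1}\epsilon(x)$; equating the two and using $\binom{\phi}{\epsilon}=-1/q$ yields the claim. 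You bypass both tools: you expand by the defining series (the $n+1$ binomial factors do coincide, since $B_i\chi=\chi$), interchange the finite sums, and let orthogonality kill every $\chi\ne\epsilon$ --- correctly noting that the sum may be taken over all of $\Fq$ because Greene's convention has $\chi(0)=0$ even for $\chi=\epsilon$ --- leaving $q\binom{\phi}{\epsilon}^{n+1}$; your Jacobi-sum evaluation $\binom{\phi}{\epsilon}=-1/q$ agrees with Greene's equation (2.12), which is what the paper cites instead. Your route is shorter and more transparent: it exhibits the first moment as $q$ times the $\chi=\epsilon$ coefficient of the hypergeometric series, and the same constant-term extraction applied to $\sum_y A(y)\,{}_{n+1}F_n(y)$ (picking out $\chi=\overline{A}$) gives $q\binom{\phi\overline{A}}{\overline{A}}^{n+1}$ for any character $A$, which for $A=\phi$ recovers Proposition \ref{prop:2} in one line. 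What the paper's approach buys is uniformity of exposition: Lemma \ref{Lem:1} and the inductive representation are developed and reused elsewhere (Proposition \ref{prop:2}, Corollary \ref{cor:gen}), so the paper derives the moment from machinery it needs anyway rather than returning to the definition.
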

\begin{proof}
Setting $A_{n+1}=B_{n+1}=\epsilon$ in Theorem \ref{th:GreensTh3.13}, for any $x\in\mathbb{F}_q^{\times}$ one has 
\begin{equation*}
\setlength\arraycolsep{1pt} 
{}_{n + 2} F_{n+1} \left(\begin{matrix} A_0 & \ \ A_1 & \ \ \cdots & \ \ A_{n}\ \ \epsilon \\ & \ \ B_1 & \ \ \cdots & \ \ B_{n}\ \  \epsilon \end{matrix} \ \middle| \ x \right) = \frac{1}{q} \sum\limits_{y\ne0,1} \setlength\arraycolsep{1pt} {}_{n+1} F_{n } \left(\begin{matrix} A_0 & \ \ A_1 & \ \ \cdots & \ \ A_{n } \\ & \ \ B_1 & \ \ \cdots & \ \ B_{n } \end{matrix} \ \middle| \ xy \right).
\end{equation*}
Using Lemma \ref{Lem:1} where we set $A_0=\cdots=A_{n}=\phi$, $B_1=\cdots= B_n=\psi=\epsilon$,  it follows that 
\begin{eqnarray*}
\setlength\arraycolsep{1pt} 
{}_{n + 2} F_{n+1} \left(\begin{matrix} \phi & \ \ \phi & \ \ \cdots & \ \ \phi \ \ \epsilon\\ & \ \ \epsilon & \ \ \cdots & \ \ \epsilon \ \ \epsilon\end{matrix} \ \middle| \ x \right) = -\frac{1}{q}\, {}_{n+1}F_n ( x)  +\begin{pmatrix}
\phi\\ \epsilon
\end{pmatrix}^{n+1} \epsilon(x).
\end{eqnarray*}

Recall that $\begin{pmatrix}
\chi \\ \chi 
\end{pmatrix}=\begin{pmatrix}
\chi \\ \epsilon
\end{pmatrix}=-\frac{1}{q}+\frac{q-1}{q}\delta(\chi)$ for a multiplicative character $\chi$ where $\delta(\chi)=0$ if $\chi\ne\epsilon$ and $\delta(\epsilon)=1$, \cite[Equation (2.12)]{Greene}. One then gets
\begin{equation*}
\setlength\arraycolsep{1pt} 
 \sum\limits_{y\ne1} \setlength\arraycolsep{1pt} {}_{n+1} F_{n } (xy )=-{}_{n+1}F_n (x )  -\left(-\frac{1}{q}\right)^{n},
\end{equation*}
hence the statement of the proposition follows.
\end{proof}
\begin{remark}
Proposition \ref{prop:1} was proved for $n=1$ and $q\equiv 3\pmod 4$ in \cite[Proposition 2.11 (3)]{Ono+Saad+Saikia}. 
When $ q \equiv 1 \pmod 4$, comparing Proposition \ref{prop:1} to \cite[Proposition 2.11 (4)]{Ono+Saad+Saikia}, we  observe that    
$$ \sum_{ \substack{\gcd(s, q) = 1 \\ s \equiv q + 1 \pmod 8}} H ^* \left(\frac{4q-s^2}{4} \right) s + 2 \sum_{ \substack{ \gcd(s, q) = 1 \\ s \equiv q + 1 \pmod {16}}} H ^* \left(\frac{4q-s^2}{16} \right) s= -1,$$ 
 where $s$ runs over all integers in the interval $-2 \sqrt{q} \leq s \leq 2 \sqrt{q}$ and $ H^*( \bullet)$ is the Hurwitz class number as defined in \cite{eicher}.
\end{remark}

In the following proposition, we find an expression for the first weighted-moment of the Gaussian hypergeometric function $_{n+1}F_n$.
\begin{proposition}
\label{prop:2}
Let $n\ge 1$ be an integer. The following identity holds true. 
\begin{equation*}
\setlength\arraycolsep{1pt} 
q^n\, \sum\limits_{y}\phi(y) \cdot \setlength\arraycolsep{1.5pt} {}_{n+1} F_{n }( y)=\left(-\phi(-1)\right)^{n+1}.
\end{equation*}
\end{proposition}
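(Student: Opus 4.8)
The plan is to mimic the proof of Proposition~\ref{prop:1}, replacing the trailing $\epsilon$-column by a $\phi$-column so that the quadratic weight $\phi(y)$ appears on its own. Concretely, I would apply Theorem~\ref{th:GreensTh3.13} to the function ${}_{n+2}F_{n+1}$ whose top and bottom rows are $\phi,\dots,\phi,\phi$ and $\epsilon,\dots,\epsilon,\phi$, that is, setting $A_{n+1}=B_{n+1}=\phi$ in the reduction. Since $\phi\phi(-1)=\epsilon(-1)=1$, $A_{n+1}(y)=\phi(y)$, and $\overline{\phi}\phi(1-y)=\epsilon(1-y)$, the trailing character factors collapse to exactly $\phi(y)\,\epsilon(1-y)$, giving
\[
{}_{n+2}F_{n+1}\!\left(\begin{matrix}\phi & \cdots & \phi & \phi\\ & \epsilon & \cdots & \epsilon & \phi\end{matrix}\ \middle|\ x\right)=\frac1q\sum_{y\ne1}\phi(y)\,{}_{n+1}F_n(xy)
\]
for $x\ne 0$. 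This is the weighted analogue of the first displayed identity in the proof of Proposition~\ref{prop:1}.

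Next I would evaluate the same ${}_{n+2}F_{n+1}$ in a second way, through Lemma~\ref{Lem:1} with $\psi=\phi$ and $A_0=\cdots=A_n=\phi$, $B_1=\cdots=B_n=\epsilon$. Because $\overline\psi=\phi$, each of the $n+1$ binomial factors becomes $\begin{pmatrix}\phi\phi\\ \phi\end{pmatrix}=\begin{pmatrix}\epsilon\\ \phi\end{pmatrix}$ and $\overline\psi(x)=\phi(x)$, so Lemma~\ref{Lem:1} yields
\[
{}_{n+2}F_{n+1}\!\left(\begin{matrix}\phi & \cdots & \phi & \phi\\ & \epsilon & \cdots & \epsilon & \phi\end{matrix}\ \middle|\ x\right)=-\frac1q\,{}_{n+1}F_n(x)+\begin{pmatrix}\epsilon\\ \phi\end{pmatrix}^{n+1}\phi(x).
\]
The one genuinely new computation is the binomial coefficient: from the definition $\begin{pmatrix}\epsilon\\ \phi\end{pmatrix}=\frac{\phi(-1)}{q}J(\epsilon,\phi)$, together with $J(\epsilon,\phi)=\sum_{x\ne0}\phi(1-x)=\sum_{u\ne1}\phi(u)=-1$, one gets $\begin{pmatrix}\epsilon\\ \phi\end{pmatrix}=-\phi(-1)/q$ and hence $\begin{pmatrix}\epsilon\\ \phi\end{pmatrix}^{n+1}=(-\phi(-1))^{n+1}/q^{n+1}$.

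Finally I would equate the two expressions, specialize to $x=1$, and clear denominators to obtain $\sum_{y\ne1}\phi(y)\,{}_{n+1}F_n(y)=-\,{}_{n+1}F_n(1)+(-\phi(-1))^{n+1}/q^n$. Adding back the $y=1$ term, namely $\phi(1)\,{}_{n+1}F_n(1)={}_{n+1}F_n(1)$, cancels the first summand on the right, leaving $\sum_y\phi(y)\,{}_{n+1}F_n(y)=(-\phi(-1))^{n+1}/q^n$, which is the claim after multiplying by $q^n$. I expect no serious obstacle, since the argument is structurally identical to Proposition~\ref{prop:1}; the only point demanding care is the sign and character bookkeeping when peeling off the $\phi$-column and evaluating $\begin{pmatrix}\epsilon\\ \phi\end{pmatrix}$, because it is precisely the factor $-\phi(-1)$ produced there, rather than the $-1$ appearing in Proposition~\ref{prop:1}, that upgrades the constant from $(-1)^{n+1}$ to $(-\phi(-1))^{n+1}$.
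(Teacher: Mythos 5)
Your proof is correct and follows essentially the same route as the paper: both apply Theorem~\ref{th:GreensTh3.13} with $A_{n+1}=B_{n+1}=\phi$, then Lemma~\ref{Lem:1} with $\psi=\phi$, and use $\begin{pmatrix}\epsilon \\ \phi\end{pmatrix}=-\phi(-1)/q$ to conclude. The only cosmetic differences are that you verify this binomial coefficient directly from the Jacobi-sum definition (the paper cites Greene's Equation~(2.13)) and that you specialize to $x=1$ instead of multiplying the identity at general $x$ by $\phi(x)$; both variants close the argument identically.
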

\begin{proof}
Setting $A_0=\cdots=A_{n}=A_{n+1}=\phi$, $B_1=\cdots= B_n=\epsilon$ and $B_{n+1}=\phi$ in Theorem \ref{th:GreensTh3.13}, it follows that for any $x\in\mathbb{F}_q^{\times}$ one has 
\begin{equation*}
\setlength\arraycolsep{1pt} 
{}_{n + 2} F_{n+1} \left(\begin{matrix} \phi & \ \ \phi & \ \ \cdots & \ \ \phi\ \ \phi \\ & \ \ \epsilon & \ \ \cdots & \ \ \epsilon\ \  \phi \end{matrix} \ \middle| \ x \right) = \frac{1}{q} \sum\limits_{y\ne0,1}\phi(y) \setlength\arraycolsep{1.5pt} {}_{n+1} F_{n } (  xy ).
\end{equation*}
Using Lemma \ref{Lem:1}, one obtains
\begin{equation*}
\setlength\arraycolsep{1pt} 
{}_{n + 2} F_{n+1} \left(\begin{matrix} \phi & \ \ \phi & \ \ \cdots & \ \ \phi \ \ \phi\\ & \ \ \epsilon & \ \ \cdots & \ \ \epsilon \ \ \phi\end{matrix} \ \middle| \ x \right)  = -\frac{1}{q}\, {}_{n+1}F_n (x ) +\begin{pmatrix}
\epsilon \\ \phi
\end{pmatrix} ^{n+1} \phi(x),\,
\end{equation*}
where $\begin{pmatrix}
\epsilon \\ \phi
\end{pmatrix}=-\frac{\phi(-1)}{q}$, \cite[Equation (2.13)]{Greene}.
This implies that 
\begin{equation*}
\setlength\arraycolsep{1pt} 
 \sum\limits_{y\ne1}\phi(xy) \setlength\arraycolsep{1.5pt} {}_{n+1} F_{n } ( xy )=-\phi(x)\,{}_{n+1}F_n (x )  +\frac{(-\phi(-1))^{n+1}}{q^n},
\end{equation*}
hence the result.
\end{proof}

\begin{corollary}
\label{Cor:2}
The following three identities hold:
\begin{itemize}
\item[1)] $\sum_{\lambda\ne0,1} a_{\lambda}(q) +\phi(-1)=-1$
\item[2)]  $\sum_{\lambda\ne 0,-1}\phi(1+\lambda) a'_{\lambda}(q)^2 + q + q^2\,\, _3F_2(1)=-1$
\item[3)]  $\sum_{\lambda\ne 0,-1} \phi(\lambda) a'_{\lambda}(q)^2+q\,\phi(-1) + q^2\,\, _3F_2(1)=-\phi(-1)$.
\end{itemize}
\end{corollary}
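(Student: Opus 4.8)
The plan is to reduce each of the three identities to the first-moment evaluations in Propositions \ref{prop:1} and \ref{prop:2} by feeding in the trace formulas recorded at the start of this section. Two conventions will be used repeatedly: that ${}_{n+1}F_n(0)=0$, which is immediate from definition (\ref{eq:defnhypergeometric}) since $\chi(0)=0$ for every multiplicative character $\chi$, and the value ${}_2F_1(1)=-\phi(-1)/q$ quoted above. The value ${}_3F_2(1)$ will be carried along as an unknown; the whole point is that it cancels.

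For the first identity I would substitute $a_{\lambda}(q)=-q\,\phi(-1)\,{}_2F_1(\lambda)$, so that $\sum_{\lambda\ne0,1}a_{\lambda}(q)=-q\,\phi(-1)\sum_{\lambda\ne0,1}{}_2F_1(\lambda)$. Proposition \ref{prop:1} with $n=1$ gives $\sum_{y}{}_2F_1(y)=1/q$, and isolating the two excluded arguments through ${}_2F_1(0)=0$ and ${}_2F_1(1)=-\phi(-1)/q$ yields $\sum_{\lambda\ne0,1}{}_2F_1(\lambda)=1/q+\phi(-1)/q$. Multiplying out and using $\phi(-1)^2=1$ produces $\sum_{\lambda\ne0,1}a_{\lambda}(q)=-\phi(-1)-1$, which is exactly the first claim.

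For the second and third identities the key manoeuvre is the change of variables $\lambda=\mu/(1-\mu)$, under which $\mu$ ranges over $\Fq-\{0,1\}$ precisely when $\lambda$ ranges over $\Fq-\{0,-1\}$, and which is the substitution built into the Clausen trace formula $a'_{\mu/(1-\mu)}(q)^2=q+q^2\phi(1-\mu)\,{}_3F_2(\mu)$. A short computation gives $1+\lambda=1/(1-\mu)$ and $\lambda=\mu/(1-\mu)$, hence $\phi(1+\lambda)=\phi(1-\mu)$ and $\phi(\lambda)=\phi(\mu)\phi(1-\mu)$. Substituting and using $\phi(1-\mu)^2=1$ for $\mu\ne1$ converts the two trace sums into
\[
\sum_{\lambda\ne0,-1}\phi(1+\lambda)a'_{\lambda}(q)^2=q\sum_{\mu\ne0,1}\phi(1-\mu)+q^2\sum_{\mu\ne0,1}{}_3F_2(\mu),
\]
\[
\sum_{\lambda\ne0,-1}\phi(\lambda)a'_{\lambda}(q)^2=q\sum_{\mu\ne0,1}\phi(\mu)\phi(1-\mu)+q^2\sum_{\mu\ne0,1}\phi(\mu)\,{}_3F_2(\mu).
\]
The hypergeometric sums come from Propositions \ref{prop:1} and \ref{prop:2} with $n=2$, namely $\sum_y{}_3F_2(y)=-1/q^2$ and $\sum_y\phi(y)\,{}_3F_2(y)=-\phi(-1)/q^2$; removing the arguments $0$ and $1$ (using ${}_3F_2(0)=0$, $\phi(0)=0$, $\phi(1)=1$) leaves the undetermined value ${}_3F_2(1)$, which cancels against the $q^2\,{}_3F_2(1)$ on the left-hand sides of the stated identities. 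The remaining character sums are elementary: $\sum_{\mu\ne0,1}\phi(1-\mu)=\sum_{t\ne0,1}\phi(t)=-\phi(1)=-1$, and $\sum_{\mu\ne0,1}\phi(\mu)\phi(1-\mu)=J(\phi,\phi)=-\phi(-1)$, the last equality being the standard fact $J(A,\overline A)=-A(-1)$ applied to $A=\phi=\overline\phi$. Assembling the terms gives $-1$ and $-\phi(-1)$ respectively.

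The computations are essentially mechanical, so the only real care lies in the bookkeeping: checking that the substitution $\lambda=\mu/(1-\mu)$ matches the excluded sets $\{0,-1\}$ and $\{0,1\}$ exactly, so that no spurious boundary term survives, and correctly peeling the single undetermined value ${}_3F_2(1)$ off the full first-moment sums so that it cancels cleanly. The one genuinely external input is the Jacobi sum evaluation $J(\phi,\phi)=-\phi(-1)$ required for the third identity.
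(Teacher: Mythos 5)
Your proof is correct and follows essentially the same route as the paper: both arguments combine Propositions \ref{prop:1} and \ref{prop:2} with the Legendre and Clausen trace formulas, the value ${}_2F_1(1)=-\phi(-1)/q$, and the substitution $\lambda=\mu/(1-\mu)$ (the paper's transformation $\lambda/(1+\lambda)\mapsto\lambda$). The only cosmetic difference is that you state the final character sum as the Jacobi sum $J(\phi,\phi)=-\phi(-1)$, which is the paper's fact $\sum_{\lambda}\phi(\lambda(1+\lambda))=-1$ after the change of variable $\lambda\mapsto-\lambda$.
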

\begin{proof}
Statement 1) follows from either Proposition \ref{prop:1} or \ref{prop:2} with $n=1$ and noticing that $_2F_1(1)=-\phi(-1)/q$, \cite[Proposition 3]{Ono}. Statement 2) holds by Proposition \ref{prop:1} for $n=2$ and applying the transformation $\lambda/(1+\lambda)\mapsto\lambda$. In a similar fashion, statement 3) follows from Proposition \ref{prop:2} and the fact that $\sum_{\lambda}\phi(\lambda(1+\lambda))=-1$.
\end{proof}

\subsection{Second weighted moments}
In the following theorem, we find an expression for the second weighted-moment of the Gaussian hypergeometric function $_{n+1}F_n$ as a sum of products of Gaussian hypergeometric functions of lower levels.
\begin{theorem}
\label{thm:1}
Let $ n$ and $k$ be integers with $ n > k \geq 1$.  Let $x\in\Fq$. The following identity holds
\begin{align}\label{n+1=2k}
{}_{n+1} F_{n} (x) = 
\frac{\phi(-1)^k}{q}\sum_{ \lambda \in \mathbb{F}_q } \phi (\lambda) \cdot {}_{n+1-k} F_{n-k} (\lambda x){}_{k} F_{k-1} (  \lambda ).
\end{align}  
In particular, for any positive integer $ k \geq 2$, the second weighted moment of the Gaussian hypergeometric series $_{2k}F_{2k-1}$ can be expressed as follows
$$q\, \phi(-1)^k \cdot {}_{2k} F_{2k-1} (1)=\sum_{ \lambda \in \mathbb{F}_q} \phi(\lambda) \cdot {}_{k} F_{k-1} (\lambda)^2.$$ 
\end{theorem}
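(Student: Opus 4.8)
The plan is to derive the general identity (\ref{n+1=2k}) directly from the inductive representation in Theorem \ref{thm:general_k}, and then specialize. The main identity relates ${}_{n+1}F_n(x)$ to a $\phi$-weighted sum of products of two lower hypergeometric functions, which is exactly the shape of the conclusion of Theorem \ref{thm:general_k} when all $n-k$ of the ``$A$-over-$B$'' top parameters are themselves taken to be $\phi$-over-$\epsilon$. So the first step is to apply Theorem \ref{thm:general_k} with the specific choice $A_0 = A_1 = \cdots = A_{n-k} = \phi$ and $B_1 = \cdots = B_{n-k} = \epsilon$. With this substitution the hypergeometric function on the left-hand side of Theorem \ref{thm:general_k} becomes ${}_{n+1}F_n(x)$ in the abbreviated notation introduced at the start of \S\ref{sec:Moments}, while the factor ${}_{n+1-k}F_{n-k}(\cdots \mid bx)$ becomes ${}_{n+1-k}F_{n-k}(bx)$, and the second factor ${}_{k}F_{k-1}(\cdots \mid b)$ becomes ${}_{k}F_{k-1}(b)$. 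Renaming the summation variable $b$ to $\lambda$ then yields (\ref{n+1=2k}) verbatim.

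For the ``in particular'' claim, the idea is to specialize the general identity (\ref{n+1=2k}) so that the two factors inside the sum coincide, turning the product into a square. The plan is to set $n = 2k-1$, so that $n+1 = 2k$ and $n+1-k = k$; this requires $n > k \ge 1$, i.e.\ $2k-1 > k$, which holds precisely for $k \ge 2$, matching the stated hypothesis. Then the left-hand side of (\ref{n+1=2k}) is ${}_{2k}F_{2k-1}(x)$, and the first factor ${}_{n+1-k}F_{n-k}(\lambda x) = {}_{k}F_{k-1}(\lambda x)$. Next I would evaluate at the point $x = 1$, so that ${}_{k}F_{k-1}(\lambda x) = {}_{k}F_{k-1}(\lambda)$ agrees with the second factor ${}_{k}F_{k-1}(\lambda)$, giving
\[
{}_{2k}F_{2k-1}(1) = \frac{\phi(-1)^k}{q}\sum_{\lambda \in \Fq} \phi(\lambda)\cdot {}_{k}F_{k-1}(\lambda)^2.
\]
Rearranging (multiplying through by $q\,\phi(-1)^{k}$ and using $\phi(-1)^{2k} = 1$) produces the displayed formula $q\,\phi(-1)^k \cdot {}_{2k}F_{2k-1}(1) = \sum_{\lambda}\phi(\lambda)\cdot {}_{k}F_{k-1}(\lambda)^2$.

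The proof is essentially a bookkeeping specialization of a previously established identity, so there is no genuine analytic obstacle here; the substance of the work already resides in Theorem \ref{thm:general_k}. The one point demanding care is the verification that the index constraints line up: one must confirm that the substitution $n = 2k-1$ indeed respects the standing hypothesis $n > k \ge 1$ of Theorem \ref{thm:general_k}, which forces $k \ge 2$ and thereby explains the restriction in the statement, and that the remaining top parameters $A_0,\dots,A_{n-k}$ can all legitimately be chosen equal to $\phi$ (and the bottom ones equal to $\epsilon$) so that the reduced factor genuinely becomes a ${}_{k}F_{k-1}$ of the abbreviated form. Once these identifications are in place, the evaluation at $x=1$ and the sign cleanup are immediate.
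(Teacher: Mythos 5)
Your proposal is correct and follows exactly the paper's own argument: specializing Theorem \ref{thm:general_k} to $A_0=\cdots=A_{n-k}=\phi$, $B_1=\cdots=B_{n-k}=\epsilon$, and then setting $n+1=2k$ and $x=1$ for the second claim. Your additional check that $n=2k-1>k$ forces $k\ge 2$, and the sign cleanup via $\phi(-1)^{2k}=1$, are exactly the bookkeeping the paper leaves implicit.
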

\begin{proof}
The proof of the first statement follows by setting $ A_0 = A_1 = \cdots = A_{n-k} =\phi$ and $ B_1 =\cdots = B_ {n-k} = \epsilon$ in Theorem \ref{thm:general_k}, whereas the second statement is established by setting $n+1=2k$ and $x=1$.
\end{proof}

\begin{remark}
We note that instances of Theorem \ref{thm:1} have been proved in other articles. For example,  the second formula above has been proved for $k=2$ in \cite[Lemma 2.2]{Ono1} during the course of realizing the value $_4F_3(1)$ as a character sum using the fact that a certain Calabi-Yau threefold is modular. The first formula in Theorem \ref{thm:1} was proved  for $n=5$ and $k=2$ in \cite[Equation (5.28)]{Fre}, whereas the second identity was established when $k=3$ in \cite[Equation (5.26)]{Fre}.
\end{remark}

\begin{corollary}
\label{cor:estimates}
The following are true:
\begin{itemize}
\item[1)] $\left| _4F_3(1)-\frac{1}{q^3}\right|\le \frac{1}{q}+O\left( 1/q^{\frac{3}{2}}\right)$
\item[2)] $|_6F_5(1)|=O(1/q^2)$.
\end{itemize}
\end{corollary}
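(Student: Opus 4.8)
The plan is to obtain both estimates from the second weighted moment identity in Theorem \ref{thm:1}, which rewrites $_{2k}F_{2k-1}(1)$ as a sum of squares of lower-level hypergeometric functions; these squares are then converted into second moments of traces of Frobenius and controlled by the Hasse bound together with the trivial character-sum bound on $_3F_2$.

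For part 1) I would apply Theorem \ref{thm:1} with $k=2$ and $x=1$ to get $q\cdot{}_4F_3(1)=\sum_{\lambda}\phi(\lambda)\,{}_2F_1(\lambda)^2$. The relation $a_\lambda(q)=-q\,\phi(-1)\,{}_2F_1(\lambda)$ for the Legendre curves gives ${}_2F_1(\lambda)^2=a_\lambda(q)^2/q^2$ for $\lambda\neq0,1$; the term $\lambda=0$ drops out since $\phi(0)=0$, and the term $\lambda=1$ contributes $\phi(1)\,{}_2F_1(1)^2=1/q^2$ because ${}_2F_1(1)=-\phi(-1)/q$. This produces the exact identity
$${}_4F_3(1)-\frac{1}{q^3}=\frac{1}{q^3}\sum_{\lambda\neq0,1}\phi(\lambda)\,a_\lambda(q)^2.$$
Since $a_\lambda(q)^2\ge 0$, the triangle inequality bounds the right-hand side by $q^{-3}\sum_{\lambda\neq0,1}a_\lambda(q)^2$, so it remains to estimate the unweighted second moment. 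Writing $a_\lambda(q)=-\sum_x\phi(x(x-1)(x-\lambda))$, squaring, and interchanging summation reduces the inner sum over $\lambda$ to the elementary quadratic sum $\sum_\lambda\phi((\lambda-x_1)(\lambda-x_2))$, which equals $q-1$ on the diagonal $x_1=x_2$ and $-1$ otherwise. A short computation then yields $\sum_{\lambda\neq0,1}a_\lambda(q)^2=q^2+O(q)$, and hence $\left|{}_4F_3(1)-1/q^3\right|\le 1/q+O(1/q^{3/2})$.

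For part 2) I would apply Theorem \ref{thm:1} with $k=3$ and $x=1$ to get $q\,\phi(-1)\cdot{}_6F_5(1)=\sum_{\lambda}\phi(\lambda)\,{}_3F_2(\lambda)^2$, so it suffices to show this sum is $O(1/q)$. The crucial point is that each value ${}_3F_2(\lambda)$ is unusually small. For $\lambda\neq0,1$ the Clausen relation $a'_{\lambda/(1-\lambda)}(q)^2=q+q^2\,\phi(1-\lambda)\,{}_3F_2(\lambda)$ gives ${}_3F_2(\lambda)=\phi(1-\lambda)\big(a'_{\lambda/(1-\lambda)}(q)^2-q\big)/q^2$; since $0\le a'_\mu(q)^2\le 4q$ by Hasse, one obtains $|{}_3F_2(\lambda)|\le 3/q$, so ${}_3F_2(\lambda)^2=O(1/q^2)$, and the $q-2$ such terms contribute only $O(1/q)$. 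The exceptional term $\lambda=1$ is handled by the trivial bound from the defining series (\ref{eq:defnhypergeometric}): every binomial coefficient $\binom{\phi\chi}{\chi}$ has absolute value at most $q^{-1/2}$ (the two possibly degenerate characters $\chi=\epsilon,\phi$ in fact give coefficients of size $O(1/q)$), whence $|{}_3F_2(x)|\le q^{-1/2}$ for all $x$ and ${}_3F_2(1)^2=O(1/q)$. Adding up, $\sum_\lambda\phi(\lambda)\,{}_3F_2(\lambda)^2=O(1/q)$, and dividing by $q$ gives $|{}_6F_5(1)|=O(1/q^2)$.

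The main obstacle is the exceptional fibre $\lambda=1$ in part 2): there the parametrization $\mu=\lambda/(1-\lambda)$ degenerates, so the sharp bound $|{}_3F_2(\lambda)|\le 3/q$ coming from the Clausen curves is unavailable and one must retreat to the weaker general bound ${}_3F_2(1)=O(1/q^{1/2})$. What makes the argument work is that this weaker bound is applied to a single summand, whose square $O(1/q)$ is still of the same order as the total contribution of the remaining $q-2$ terms, so the final estimate is unaffected; crucially, the generic terms must be bounded by the Clausen-sharpened $O(1/q^2)$ rather than by $O(1/q)$, since the latter would only give $O(1/q)$ for the whole sum. A secondary point requiring care is pinning down the leading constant in part 1): one must confirm that the main term of $\sum_{\lambda\neq0,1}a_\lambda(q)^2$ is exactly $q^2$, which is what forces the coefficient $1$ in the bound $1/q+O(1/q^{3/2})$.
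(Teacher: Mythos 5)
Your proposal is correct, and although it starts from exactly the same skeleton as the paper --- Theorem \ref{thm:1} with $k=2,3$, the trace dictionaries $a_\lambda(q)=-q\phi(-1)\,{}_2F_1(\lambda)$ and $a'_{\lambda/(1-\lambda)}(q)^2=q+q^2\phi(1-\lambda)\,{}_3F_2(\lambda)$, and the same exact identity ${}_4F_3(1)-1/q^3=q^{-3}\sum_{\lambda\ne0,1}\phi(\lambda)a_\lambda(q)^2$ --- the two key estimates are obtained by genuinely different means. For part 1) the paper cites Michel's theorem \cite{Mi} to get $\sum_{\lambda\ne0,1}a_\lambda(q)^2=q^2+O(q^{3/2})$, whereas you compute this second moment elementarily: expanding $a_\lambda(q)$ as a quadratic character sum and using $\sum_\lambda\phi((\lambda-x_1)(\lambda-x_2))=q-1$ or $-1$ according to whether $x_1=x_2$, one gets the exact value $q^2-2q-3$; this is self-contained, avoids a deep input, and even sharpens the error to $O(1/q^2)$. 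For part 2) the divergence is larger: the paper expands $\left(a'_{\lambda/(1-\lambda)}(q)^2-q\right)^2$ and invokes two external moment estimates (Michel for the second moment and \cite{Weng} for the fourth moment $\sum_\lambda a'_\lambda(q)^4=2q^3+O(q^{5/2})$) together with ${}_3F_2(1)=O(q^{-1/2})$ drawn from Corollary \ref{Cor:2}, while you observe that Hasse's bound alone gives the pointwise estimate $|{}_3F_2(\lambda)|\le 3/q$ for each $\lambda\ne0,1$, so the $q-2$ generic terms of $\sum_\lambda\phi(\lambda)\,{}_3F_2(\lambda)^2$ already sum to $O(1/q)$ termwise, with no cancellation or moment machinery; only the fibre $\lambda=1$ needs the trivial Jacobi-sum bound $|{}_3F_2(1)|\le q^{-1/2}$, which you verify directly from the defining series. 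Your route is more elementary and arguably more robust: the paper applies the unsigned estimates of \cite{Mi} and \cite{Weng} to $\phi$-twisted sums without comment, and its claim ${}_3F_2(1)=O(q^{-1/2})$ via Corollary \ref{Cor:2} itself quietly presupposes square-root cancellation in a twisted second moment, whereas your bound on ${}_3F_2(1)$ needs nothing beyond $|J(A,B)|\le\sqrt q$. What the paper's heavier expansion buys is visibility of the would-be main terms (the cancelling $2/q$ contributions), which would matter if one wanted an asymptotic for ${}_6F_5(1)$ rather than the stated upper bound; for the bound $O(1/q^2)$ itself your argument is complete, and your closing caution --- that the generic terms must be bounded by $O(1/q^2)$ each, the single exceptional term being the only place the weaker $O(1/q)$ is tolerable --- is exactly the right accounting.
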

\begin{proof}
One knows that $q _4F_3(1)=\sum_{\lambda}\phi(\lambda) _2F_1(\lambda)^2$, see Theorem \ref{thm:1}. It follows that $$q _4F_3(1)=\sum_{\lambda}\phi(\lambda)(-\phi(-1)a_{\lambda}(q)/q)^2+ _2F_1(1)^2= \sum_{\lambda\ne 0,1}\phi(\lambda) a_{\lambda}(q)^2/q^2+1/q^2.$$
Thus 
$$\left| _4F_3(1)-\frac{1}{q^3} \right|\le \frac{1}{q^3}\sum_{\lambda\ne 0,1}a_{\lambda}(q)^2.$$
Michel proved in \cite{Mi} that for a family of one-parameter families of elliptic curves with a non-constant $j$-invariant $j(\lambda )$, one has
$$\sum_{\lambda\ne 0,1}a_{\lambda}(q)^2=q^2+O(q^{3/2}),$$
hence statement 1) follows.

Similarly,  \begin{eqnarray*}
q\phi(-1)\,  _6F_5(1)&=&\sum_{\lambda} \phi(\lambda) _3F_2(\lambda)^2\\
&=& \frac{ 1 }{q^4}\sum_{\lambda\ne 0,1}\phi(\lambda)\left(a'_{\frac{\lambda}{1-\lambda}} (q)^2-q\right)^2  + _3F_2(1)^2\\
&=&\frac{1}{q^4}\sum_{\lambda\ne 0,-1}\phi(\lambda(1+\lambda))a'_{\lambda}(q)^4-\frac{2}{q^3}\sum_{\lambda\ne 0,-1}\phi(\lambda(1+\lambda))a'_{\lambda}(q)^2+\frac{1}{q^2}\sum_{\lambda\ne 0,-1}\phi(\lambda(1+\lambda))\\&+&  _3F_2(1)^2.
\end{eqnarray*}
We now estimate the terms above. The sum $\sum_{\lambda\ne 0,-1}\phi(\lambda(1+ \lambda))=-1$. The result of \cite{Mi} implies that  $\sum_{\lambda\ne0,1}\phi(\lambda(1+\lambda))a'_{\lambda}(q)^2=q^2+O(q^{3/2})$ and $_3F_2(1)=O(1/q^{1/2})$, see Corollary \ref{Cor:2}.  In Lemma C.1 of \cite{Weng}, It was shown that for a family of one-parameter families of elliptic curves with a non-constant $j$-invariant $j(\lambda )$, one has $\sum_{\lambda\ne 0,1}a'_{\lambda}(q)^4=2q^3+O(q^{5/2}).$ Statement 2) now follows by comparing these estimates.
\end{proof}

\section{Generating functions of Gaussian hypergeometric functions}
\label{sec:GeneratingFunctions}

Using Definition (\ref{eq:defnhypergeometric}) and Theorem \ref{th:GreensTh3.13}, one obtains

\begin{footnotesize}
\begin{equation*}
\setlength\arraycolsep{1pt} 
{}_{n + 1} F_n \left(\begin{matrix} A_0 & \ \ A_1 & \ \ \cdots & \ \ A_n \\ & \ \ B_1 & \ \ \cdots & \ \ B_n \end{matrix} \ \middle| \ x \right) = \frac{A_nB_n(-1)}{q}\sum_y\frac{q}{q-1} \sum_{\chi}{A_0\chi\choose\chi}{A_1\chi\choose B_1\chi}\cdots {A_{n-1}\chi\choose B_{n-1}\chi}\chi(xy) A_n(y)\overline{A}_nB_n(1-y).
\end{equation*}
\end{footnotesize}

We now proceed towards proving the main theorem of this section.

\begin{footnotesize}
\begin{theorem}
\label{thm:gen}
For any integer $n\ge 1$,  $x\in\Fq^{\times}$ and $t\in\Fq-\{0,1\}$, the following is true:
\begin{align*}
\setlength\arraycolsep{1pt} 
\frac{q}{q-1}\sum_{\psi}{A_n\overline{B}_n\psi\choose\psi} {}_{n+1 } F_{n} \left(\begin{matrix} A_0 & \ \ A_1 & \ \ \cdots & \ \ A_{n-1}& \ \ A_n\psi \\ & \ \ B_1 & \ \ \cdots & \ \ B_{n-1} & \ \ B_n\end{matrix} \ \middle| x \right)\psi(t)={}_{n + 1} F_n \left(\begin{matrix} A_0 & \ \ A_1 & \ \ \cdots & \ \ A_n \\ & \ \ B_1 & \ \ \cdots & \ \ B_n \end{matrix} \ \middle| \frac{ x}{1-t} \right) \overline{A}_n(1-t)
\end{align*}
\end{theorem}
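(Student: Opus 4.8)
The plan is to unfold the series definition \eqref{eq:defnhypergeometric} of the hypergeometric function appearing on the left and to reduce the whole statement to a single identity for a convolution of two binomial coefficients. Writing out the inner ${}_{n+1}F_n$ with top row $A_0,\dots,A_{n-1},A_n\psi$ produces a double sum over a character $\chi$ and over $\psi$; after interchanging the two summations the $\chi$-dependent product $\binom{A_0\chi}{\chi}\binom{A_1\chi}{B_1\chi}\cdots\binom{A_{n-1}\chi}{B_{n-1}\chi}\chi(x)$ factors out, and the theorem becomes equivalent to the \emph{inner identity}
\[
S(\chi):=\frac{q}{q-1}\sum_{\psi}\binom{A_n\overline{B}_n\psi}{\psi}\binom{A_n\psi\chi}{B_n\chi}\psi(t)=\binom{A_n\chi}{B_n\chi}\overline{A_n\chi}(1-t).
\]
Granting this, the splittings $\overline{A_n\chi}(1-t)=\overline{A}_n(1-t)\overline{\chi}(1-t)$ and $\chi(x)\overline\chi(1-t)=\chi\!\left(\tfrac{x}{1-t}\right)$ rebuild exactly ${}_{n+1}F_n$ evaluated at $\tfrac{x}{1-t}$, multiplied by $\overline{A}_n(1-t)$, which is the right-hand side.

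To prove the inner identity (writing $A:=A_n$, $B:=B_n$) I would first expand the second binomial as a Jacobi sum, $\binom{A\psi\chi}{B\chi}=\tfrac{B\chi(-1)}{q}\sum_u (A\chi)(u)\psi(u)\overline{B\chi}(1-u)$, and interchange the $u$- and $\psi$-summations. The resulting $\psi$-sum is $\tfrac{q}{q-1}\sum_\psi\binom{A\overline B\psi}{\psi}\psi(ut)$, which collapses via the ${}_1F_0$-type identity $\tfrac{q}{q-1}\sum_\psi\binom{C\psi}{\psi}\psi(s)=\epsilon(s)\overline{C}(1-s)$ (itself immediate from the Jacobi-sum definition of the binomial coefficient and the orthogonality of multiplicative characters). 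Taking $C=A\overline B$ leaves a single character sum $T=\sum_u (A\chi)(u)\,\overline{B\chi}(1-u)\,\overline{A}B(1-ut)$, so that $S(\chi)=\tfrac{B\chi(-1)}{q}T$.

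The heart of the argument is the evaluation of $T$, which I would carry out with the fractional-linear substitution $u=\tfrac{w}{1-t+tw}$. Under it one computes $1-u=\tfrac{(1-t)(1-w)}{1-t+tw}$ and $1-ut=\tfrac{1-t}{1-t+tw}$, so all factors of $(1-t+tw)$ cancel and the summand becomes $\overline{A\chi}(1-t)\,(A\chi)(w)\,\overline{B\chi}(1-w)$. Summing over $w$ then recovers $\overline{A\chi}(1-t)\,J(A\chi,\overline{B\chi})$, i.e. $\overline{A\chi}(1-t)\binom{A\chi}{B\chi}$ after restoring the factor $\tfrac{B\chi(-1)}{q}$, which is precisely the claimed value of $S(\chi)$.

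The step I expect to be the real obstacle is this last substitution, because $u\mapsto w$ is a bijection of $\mathbb{P}^1(\Fq)$ rather than of $\Fq$: the pole $w=(t-1)/t$ is sent to $u=\infty$ while $w=\infty$ is sent to $u=1/t$, so the two ranges of summation fail to match at a single point. Consequently the transformed sum equals $J(A\chi,\overline{B\chi})$ \emph{minus} the lone boundary term at $w=(t-1)/t$, and a short computation (using $\tfrac{t-1}{t}=\tfrac{-(1-t)}{t}$, so that $\overline{A\chi}(1-t)(A\chi)(\tfrac{t-1}{t})=(A\chi)(-1)\overline{A\chi}(t)$) shows this defect contributes $-\tfrac{A(-1)B(-1)}{q}\,\overline{A}B(t)$ to $S(\chi)$, \emph{independently of} $\chi$. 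Summed back over $\chi$ against $\prod_{i<n}\binom{A_i\chi}{B_i\chi}\chi(x)$, this would add a correction $-\tfrac{A_n(-1)B_n(-1)}{q}\,\overline{A}_nB_n(t)\cdot{}_nF_{n-1}(x)$ (where ${}_nF_{n-1}(x)$ is the lower series with top $A_0,\dots,A_{n-1}$ over $B_1,\dots,B_{n-1}$) to the stated right-hand side. So the first thing I would do is pin this boundary term down carefully: either the hypotheses $x\in\Fq^{\times}$ and $t\neq0,1$ are meant to force it to vanish in the intended application, or the clean identity as written should be understood to carry this extra lower-order hypergeometric correction, and verifying its precise form is where the genuine care lies.
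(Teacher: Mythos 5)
Your reduction to the inner identity and your evaluation of it via the substitution $u=\tfrac{w}{1-t+tw}$ are both correct, and the concern in your final paragraph is not a removable technicality: the boundary term does \emph{not} vanish under the stated hypotheses, and the theorem as printed is false. Completing your computation, the inner sum is
\begin{equation*}
\frac{q}{q-1}\sum_{\psi}\binom{A_n\overline{B}_n\psi}{\psi}\binom{A_n\psi\chi}{B_n\chi}\psi(t)
=\binom{A_n\chi}{B_n\chi}\overline{A_n\chi}(1-t)-\frac{A_nB_n(-1)}{q}\,\overline{A}_nB_n(t),
\end{equation*}
so the correct statement carries exactly your correction term:
\begin{align*}
\frac{q}{q-1}\sum_{\psi}\binom{A_n\overline{B}_n\psi}{\psi}\,{}_{n+1}F_{n}\left(\begin{matrix} A_0 & \ \cdots & \ A_{n-1} & \ A_n\psi \\ & \ B_1 & \ \cdots & \ B_{n-1} \ \ B_n\end{matrix}\,\middle|\, x\right)\psi(t)
={}&\overline{A}_n(1-t)\,{}_{n+1}F_n\left(\begin{matrix} A_0 & \ \cdots & \ A_n \\ & \ B_1 & \ \cdots & \ B_n\end{matrix}\,\middle|\,\frac{x}{1-t}\right)\\
&-\frac{A_nB_n(-1)}{q}\,\overline{A}_nB_n(t)\,\;{}_{n}F_{n-1}\left(\begin{matrix} A_0 & \ \cdots & \ A_{n-1} \\ & \ B_1 & \ \cdots & \ B_{n-1}\end{matrix}\,\middle|\, x\right).
\end{align*}
A concrete counterexample to the clean identity: take $q=3$, $n=1$, $A_0=A_1=\phi$, $B_1=\epsilon$, $x=2$, $t=2$. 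Then ${}_2F_1(\phi,\phi;\epsilon\mid 2)$ and ${}_2F_1(\phi,\epsilon;\epsilon\mid 2)$ both equal $\pm\tfrac16\bigl(\epsilon(2)+\phi(2)\bigr)=0$, so the left-hand side vanishes, while the right-hand side is ${}_2F_1(\phi,\phi;\epsilon\mid 1)\,\phi(-1)=\tfrac13\cdot(-1)=-\tfrac13$; the difference $\tfrac13$ is precisely $-\frac{\phi(-1)}{3}\phi(2)\,{}_1F_0(\phi\mid 2)$, your defect term.

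The paper's own proof goes by a different route---Greene's inductive representation (Theorem \ref{th:GreensTh3.13}), the substitution $y=v(1-t)$, the factorization $\overline{A}_nB_n(1-v(1-t))=\overline{A}_nB_n(1-v)\,\overline{A}_nB_n\bigl(1+\tfrac{vt}{1-v}\bigr)$, and then Greene's binomial theorem---but it commits, in mirror image, exactly the error you flagged: that factorization is invalid at $v=1$, where the left side equals $\overline{A}_nB_n(t)\neq0$ while the right side acquires the factor $\overline{A}_nB_n(0)=0$ (Greene's convention, used throughout the paper, is $\chi(0)=0$ for every $\chi$, including $\epsilon$). The single term silently discarded there is $\frac{A_nB_n(-1)}{q}A_n(1-t)\overline{A}_nB_n(t)\,{}_nF_{n-1}(\cdots\mid x)$, i.e.\ exactly $A_n(1-t)$ times your correction, so both arguments, carried out carefully, land on the same corrected identity displayed above; your coefficient-wise approach has the advantage of making the lost term visible rather than hiding it inside an illegal factorization. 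The defect also propagates: Corollary \ref{cor:gen} applies the theorem with last pair of characters $(\epsilon,\epsilon)$, where the correction equals $-\tfrac1q\,{}_{n+1}F_n(\cdots\mid x)$, and redoing that proof changes the coefficient of ${}_{n+1}F_n(\cdots\mid x)$ in its conclusion from $-(q-2)$ to $+1$ (for $q=3$, $x=1$, $t=2$ the corrected form gives $0=0$, the printed one gives $0=-\tfrac23$), with the subsequent Remark affected in the same way. So do not look for a reason the boundary term should disappear; state and prove the corrected identity instead.
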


\begin{proof}
In what follows $v=y/(1-t)$.

\begin{align*}
\setlength\arraycolsep{1pt} 
{}_{n + 1} F_n \left(\begin{matrix} A_0 & \ \ A_1 & \ \ \cdots & \ \ A_n \\ & \ \ B_1 & \ \ \cdots & \ \ B_n \end{matrix} \ \middle| \frac{ x}{1-t} \right) = \frac{A_nB_n(-1)}{q}\sum_y\frac{q}{q-1} \sum_{\chi}{A_0\chi\choose\chi}\cdots {A_{n-1}\chi\choose B_{n-1}\chi}\chi\left(\frac{xy}{1-t}\right) A_n(y)\overline{A}_nB_n(1-y)\\
=\frac{A_nB_n(-1)}{q}\sum_v\frac{q}{q-1} \sum_{\chi}{A_0\chi\choose\chi}\cdots {A_{n-1}\chi\choose B_{n-1}\chi}\chi\left(xv\right) A_n(v(1-t))\overline{A}_nB_n(1-v(1-t))\\
= \frac{A_nB_n(-1)}{q}\sum_v\frac{q}{q-1} \sum_{\chi}{A_0\chi\choose\chi}\cdots {A_{n-1}\chi\choose B_{n-1}\chi}\chi\left(xv\right) A_n(v(1-t))\overline{A}_nB_n(1-v)\overline{A}_nB_n\left(1+\frac{vt}{1-v}\right)\\
=\frac{A_nB_n(-1)}{q}\sum_v\frac{q^2}{(q-1)^2} \sum_{\chi,\psi}{A_0\chi\choose\chi}\cdots {A_{n-1}\chi\choose B_{n-1}\chi}\chi\left(xv\right) A_n(v(1-t))\overline{A}_nB_n(1-v){A_n\overline{B}_n\psi\choose\psi}\psi\left(\frac{-vt}{1-v}\right)\\
=\frac{qA_nB_n(-1)}{(q-1)^2}\sum_v \sum_{\chi,\psi}{A_0\chi\choose\chi}\cdots {A_{n-1}\chi\choose B_{n-1}\chi}{A_n\overline{B}_n\psi\choose\psi}\chi\left(xv\right) A_n(v(1-t))\overline{A_n\psi}B_n(1-v)\psi(-vt)\\
=\frac{ A_nB_n(-1)}{q-1}\sum_v\frac{q}{q-1}A_n(v(1-t))\sum_{\psi}{A_n\overline{B}_n\psi\choose\psi} \overline{A_n\psi}B_n(1-v) \psi(-vt)\sum_{\chi}{A_0\chi\choose\chi}\cdots {A_{n-1}\chi\choose B_{n-1}\chi}\chi\left(xv\right)\\
=\frac{ A_nB_n(-1)}{q-1}\sum_vA_n(v(1-t))\sum_{\psi}{A_n\overline{B}_n\psi\choose\psi} \overline{A_n\psi}B_n(1-v) \psi(-vt){}_{n } F_{n-1} \left(\begin{matrix} A_0 & \ \ A_1 & \ \ \cdots & \ \ A_{n-1} \\ & \ \ B_1 & \ \ \cdots & \ \ B_{n-1} \end{matrix} \ \middle| xv \right)\\
=\frac{ A_n\psi B_n(-1)}{q-1}\sum_{\psi}A_n(1-t)\psi(t){A_n\overline{B}_n\psi\choose\psi} \sum_vA_n\psi(v)\overline{A_n\psi}B_n(1-v) {}_{n } F_{n-1} \left(\begin{matrix} A_0 & \ \ A_1 & \ \ \cdots & \ \ A_{n-1} \\ & \ \ B_1 & \ \ \cdots & \ \ B_{n-1} \end{matrix} \ \middle| xv \right)\\
=\frac{q}{q-1}\sum_{\psi}A_n(1-t)\psi(t){A_n\overline{B}_n\psi\choose\psi} {}_{n+1} F_{n} \left(\begin{matrix} A_0 & \ \ A_1 & \ \ \cdots & \ \ A_{n-1}&A_n\psi \\ & \ \ B_1 & \ \ \cdots & \ \ B_{n-1} &B_n\end{matrix} \ \middle| x \right).
\end{align*}
where we used the identity
\begin{equation*}
\overline{A}_nB_n\left(1+\frac{vt}{1-v}\right)=\delta\left(-\frac{vt}{1-v}\right)+\frac{q}{q-1}\sum_{\psi}{A_n\overline{B}_n\psi\choose\psi}\psi\left(\frac{-vt}{1-v}\right),
\end{equation*}
see \cite[Equation (2.10)]{Greene}.
\end{proof}
\end{footnotesize}

We notice that in the theorem above the position of $\psi$ can be changed to be multiplied times any of the $A_i$, $1\le i \le n$.

The following corollary provides a closed form solution for a weighted-sum of certain Gaussian hypergeometric series over all the characters over $\Fq$.
\begin{corollary}
\label{cor:gen}
Let $A\ne\epsilon$ be a multiplicative character over $\Fq$. For any integer $n\ge 1$,  $x\in\Fq^{\times}$ and $t\in\Fq-\{0,1\}$, one has
\begin{eqnarray*}
\setlength\arraycolsep{1pt} 
q\sum_{\psi\ne \epsilon} {}_{n+2 } F_{n+1} \left(\begin{matrix} A & \ \ A & \ \ \cdots & \ \ A& \ \ \psi \\ & \ \ \epsilon& \ \ \cdots & \ \ \epsilon & \ \ \epsilon\end{matrix} \ \middle| x \right)\psi(t)&=&\\ (q-1){}_{n + 1} F_n \left(\begin{matrix} A & \ \ A & \ \ \cdots & \ \ A \\ & \ \ \epsilon & \ \ \cdots & \ \ \epsilon \end{matrix} \ \middle| \frac{ x}{1-t} \right)&-&(q-2){}_{n + 1} F_n \left(\begin{matrix} A & \ \ A & \ \ \cdots & \ \ A \\ & \ \ \epsilon & \ \ \cdots & \ \ \epsilon \end{matrix} \ \middle| x \right)+\left(\frac{-1}{q}\right)^n.
\end{eqnarray*}
\end{corollary}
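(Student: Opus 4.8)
The plan is to specialize Theorem~\ref{thm:gen} and then clean up the two resulting boundary terms with Lemma~\ref{Lem:1}. Write $F(y)$ for the function ${}_{n+1}F_n$ evaluated at $y$ with all numerator parameters equal to $A$ and all denominator parameters equal to $\epsilon$. I would apply Theorem~\ref{thm:gen} with $n$ replaced by $n+1$ and with the choices $A_0=\cdots=A_n=A$, $A_{n+1}=\epsilon$, and $B_1=\cdots=B_{n+1}=\epsilon$. Since then $A_{n+1}\overline{B}_{n+1}\psi=\psi$ and $\overline{A}_{n+1}(1-t)=\epsilon(1-t)=1$ (as $t\neq1$), the theorem specializes to
$$\frac{q}{q-1}\sum_{\psi}\begin{pmatrix}\psi\\\psi\end{pmatrix}{}_{n+2}F_{n+1}\left(\begin{matrix} A & A & \cdots & A & \psi \\ & \epsilon & \cdots & \epsilon & \epsilon\end{matrix}\ \middle|\ x\right)\psi(t)={}_{n+2}F_{n+1}\left(\begin{matrix} A & A & \cdots & A & \epsilon \\ & \epsilon & \cdots & \epsilon & \epsilon\end{matrix}\ \middle|\ \frac{x}{1-t}\right),$$
where the trailing numerator entry on the left is $A_{n+1}\psi=\psi$ and that on the right is $A_{n+1}=\epsilon$.

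Next I would separate the $\psi=\epsilon$ contribution on the left-hand side. Using $\begin{pmatrix}\psi\\\psi\end{pmatrix}=-\frac1q+\frac{q-1}{q}\delta(\psi)$ \cite[Equation (2.12)]{Greene}, the $\psi=\epsilon$ summand carries the coefficient $\frac{q-2}{q}$ while each $\psi\neq\epsilon$ summand carries $-\frac1q$; note also $\epsilon(t)=1$ since $t\neq0$. This isolates the target sum $S:=\sum_{\psi\neq\epsilon}{}_{n+2}F_{n+1}(\cdots\psi\cdots)\psi(t)$ and leaves exactly two occurrences of a hypergeometric function bearing a matching trailing pair $\epsilon/\epsilon$: one at argument $x$ (the $\psi=\epsilon$ summand) and one at argument $x/(1-t)$ (the right-hand side). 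Both of these reduce by Lemma~\ref{Lem:1} taken with $\psi=\epsilon$, $A_0=\cdots=A_n=A$, $B_1=\cdots=B_n=\epsilon$: for $y\in\Fq^{\times}$ one obtains
$${}_{n+2}F_{n+1}\left(\begin{matrix} A & \cdots & A & \epsilon \\ & \epsilon & \cdots & \epsilon & \epsilon\end{matrix}\ \middle|\ y\right)=-\frac1q\,F(y)+\left(\frac{-1}{q}\right)^{n+1},$$
where I use that $\begin{pmatrix}A\\\epsilon\end{pmatrix}=-\frac1q$ precisely because $A\neq\epsilon$ (so $\delta(A)=0$), that there are $n+1$ such binomial factors, and that $\epsilon(y)=1$.

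Substituting this reduction at $y=x$ and $y=x/(1-t)$ (both nonzero since $x\neq0$ and $t\neq1$) into the specialized theorem gives a single linear relation among $S$, $F(x)$, $F(x/(1-t))$, and the constants $(-1/q)^{n+1}$. Solving for $S$ and multiplying by $q$, the coefficients $q-2$ and $q-1$ multiply the constant terms so that $-q\cdot(-1/q)^{n+1}$ survives as $(-1/q)^n$, and one recovers exactly the stated closed form. The argument is essentially bookkeeping, so there is no deep obstacle; the step requiring the most care is keeping the two $\epsilon/\epsilon$ terms straight, namely checking that the $\psi=\epsilon$ summand and the right-hand side both genuinely acquire a matching trailing pair (so that Lemma~\ref{Lem:1} applies to each), while the $\psi\neq\epsilon$ summands do not and are retained intact inside $S$. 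Verifying that the hypothesis $A\neq\epsilon$ is exactly what forces $\begin{pmatrix}A\\\epsilon\end{pmatrix}=-1/q$, and that the powers of $q$ and the signs reassemble into the claimed right-hand side, completes the proof.
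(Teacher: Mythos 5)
Your proposal is correct and follows essentially the same route as the paper's proof: specialize Theorem~\ref{thm:gen} with $A_0=\cdots=A_n=A$, $A_{n+1}=B_1=\cdots=B_{n+1}=\epsilon$, split off the $\psi=\epsilon$ term via $\begin{pmatrix}\psi\\ \psi\end{pmatrix}=-\frac{1}{q}+\frac{q-1}{q}\delta(\psi)$, and reduce the two trailing-$\epsilon/\epsilon$ functions ${}_{n+2}F_{n+1}$ at $x$ and at $x/(1-t)$ via Lemma~\ref{Lem:1} with $\begin{pmatrix}A\\ \epsilon\end{pmatrix}=-\frac{1}{q}$. The only difference is the order of presentation (the paper applies Lemma~\ref{Lem:1} before invoking Theorem~\ref{thm:gen}), and your bookkeeping of the constants, including the survival of $\left(\frac{-1}{q}\right)^n$, matches the paper's computation.
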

\begin{proof}
In view of Lemma \ref{Lem:1}, one sees that 
\begin{align*}
\setlength\arraycolsep{1pt} 
{}_{n + 2} F_{n+1} \left(\begin{matrix} A & \ \ A & \ \ \cdots & \ \ A \ \ \epsilon \\ & \ \ \epsilon & \ \ \cdots & \ \ \epsilon \ \ \epsilon \end{matrix} \ \middle| \frac{ x}{1-t} \right)=\frac{-1}{q}{}_{n + 1} F_{n} \left(\begin{matrix} A & \ \ A & \ \ \cdots & \ \ A  \\ & \ \ \epsilon & \ \ \cdots & \ \ \epsilon  \end{matrix} \ \middle| \frac{ x}{1-t} \right)+\begin{pmatrix}
A \\ \epsilon
\end{pmatrix}^{n+1},\quad \begin{pmatrix}
A \\ \epsilon
\end{pmatrix}=-\frac{1}{q}.
\end{align*}
In accordance with Theorem \ref{thm:gen}, one notices that
\begin{eqnarray*}
\setlength\arraycolsep{1pt} 
{}_{n + 2} F_{n+1} \left(\begin{matrix} A & \ \ A & \ \ \cdots & \ \ A \ \ \epsilon \\ & \ \ \epsilon & \ \ \cdots & \ \ \epsilon \ \ \epsilon \end{matrix} \ \middle| \frac{ x}{1-t} \right)&=&
\frac{q}{q-1}\sum_{\psi}{\psi\choose\psi} {}_{n+2 } F_{n+1} \left(\begin{matrix} A & \ \ A & \ \ \cdots & \ \ A& \ \ \psi \\ & \ \ \epsilon& \ \ \cdots & \ \ \epsilon & \ \ \epsilon\end{matrix} \ \middle| x \right)\psi(t).
\end{eqnarray*}
Again Lemma \ref{Lem:1} implies that
\begin{eqnarray*}
\setlength\arraycolsep{1pt} 
{}_{n + 2} F_{n+1} \left(\begin{matrix} A & \ \ A & \ \ \cdots & \ \ A \ \ \epsilon \\ & \ \ \epsilon & \ \ \cdots & \ \ \epsilon \ \ \epsilon \end{matrix} \ \middle|  x \right)&=&
\frac{-1}{q}{}_{n + 1} F_{n} \left(\begin{matrix} A & \ \ A & \ \ \cdots & \ \ A  \\ & \ \ \epsilon & \ \ \cdots & \ \ \epsilon  \end{matrix} \ \middle| x \right)+\left(\frac{-1}{q}\right)^{n+1},
\end{eqnarray*}
hence, the fact that $\begin{pmatrix}
\psi\\ \psi
\end{pmatrix}=-\frac{1}{q}$ when $\psi\ne\epsilon$; whereas $\begin{pmatrix}
\epsilon \\ \epsilon
\end{pmatrix}=\frac{q-2}{q}$ yields that
\begin{eqnarray*}
\setlength\arraycolsep{1pt} 
{}_{n + 2} F_{n+1} \left(\begin{matrix} A & \ \ A & \ \ \cdots & \ \ A \ \ \epsilon \\ & \ \ \epsilon & \ \ \cdots & \ \ \epsilon \ \ \epsilon \end{matrix} \ \middle| \frac{ x}{1-t} \right)
&=& \frac{-q}{q(q-1)}\sum_{\psi\ne \epsilon} {}_{n+2 } F_{n+1} \left(\begin{matrix} A & \ \ A & \ \ \cdots & \ \ A& \ \ \psi \\ & \ \ \epsilon& \ \ \cdots & \ \ \epsilon & \ \ \epsilon\end{matrix} \ \middle| x \right)\psi(t)\\&+&\frac{q(q-2)}{q(q-1)}\left( \frac{-1}{q}{}_{n + 1} F_{n} \left(\begin{matrix} A & \ \ A & \ \ \cdots & \ \ A  \\ & \ \ \epsilon & \ \ \cdots & \ \ \epsilon  \end{matrix} \ \middle| x \right)+\left(\frac{-1}{q}\right)^{n+1}\right),
\end{eqnarray*}
hence the result.
\end{proof}

\begin{remark}
In view of Corollary \ref{cor:gen}, one can see for example that 

\begin{eqnarray*}
\setlength\arraycolsep{1pt} 
\sum_{\psi\ne \epsilon} {}_{3 } F_{2} \left(\begin{matrix} \phi & \ \ \phi & \ \ \psi \\ & \ \ \epsilon & \ \ \epsilon\end{matrix} \ \middle| \lambda \right)\psi(t)=\frac{q-1}{q} {}_{2} F_1 \left(\begin{matrix} \phi & \ \ \phi  \\ & \ \ \epsilon  \end{matrix} \ \middle| \frac{ \lambda}{1-t} \right)-\frac{q-2}{q}{}_{2} F_1 \left(\begin{matrix} \phi & \ \ \phi  \\ & \ \ \epsilon  \end{matrix} \ \middle| \lambda \right)-\frac{1}{q^2}.
\end{eqnarray*}
In particular, if $t=1-\lambda^2$, then 
\begin{eqnarray*}
\setlength\arraycolsep{1pt} 
\sum_{\psi\ne \epsilon} {}_{3 } F_{2} \left(\begin{matrix} \phi & \ \ \phi & \ \ \psi \\ & \ \ \epsilon & \ \ \epsilon\end{matrix} \ \middle| \lambda \right)\psi(1-\lambda^2)=\left(\frac{q-1}{q}\phi(\lambda)-\frac{q-2}{q}\right){}_{2} F_1( \lambda )-\frac{1}{q^2},
\end{eqnarray*}
where $_2F_1(\lambda)=-\phi(-1)a_{\lambda}(q)/q$. In a similar fashion, 
\begin{eqnarray*}
\setlength\arraycolsep{1pt} 
\sum_{\psi\ne \epsilon} {}_{4 } F_{3} \left(\begin{matrix} \phi & \ \ \phi &\ \ \phi& \ \ \psi \\ & \ \ \epsilon & \ \ \epsilon&\ \ \epsilon\end{matrix} \ \middle| \lambda \right)\psi(1-\lambda^2)=\left(\frac{q-1}{q}\phi(-\lambda)-\frac{q-2}{q}\right){}_{3} F_2( \lambda )+\frac{1}{q^3}.
\end{eqnarray*}
where $_3F_2(\lambda)$ is the trace of Frobenius of the Clausen elliptic curve $E'$, see \S \ref{sec:Moments}.
\end{remark}

\end{document}